\newcommand{\email}[1]{\protect\href{mailto:#1}{#1}}
\newtheorem{lem}{Lemma}
\newtheorem{theo}{Theorem}
\newtheorem{coro}{Corollary}
\newtheorem{assumption}{Assumption}
\newtheorem{example}{Example}
\newtheorem{remark}{Remark}
\newtheorem{con}{Condition}
\newcounter{spb}
\def\1{\mathbbm{1}}
\newcommand{\bw}{\boldsymbol{w}}
\newcommand{\w}{\boldsymbol{w}}
\newcommand{\lam}{\lambda}
\newcommand{\blam}{\boldsymbol{\lambda}}
\newcommand{\bsig}{\boldsymbol{\sigma}}
\newcommand{\bv}{\boldsymbol{v}}
\newcommand{\bell}{\boldsymbol{\ell}}
\newcommand{\ex}{\mathbb{E}}
\newcommand{\+}{_{k+1}}
\newcommand{\m}{_{k-1}}
\newcommand{\0}{_k}
\newcommand{\x}{\boldsymbol{x}}
\newcommand{\y}{\boldsymbol{y}}
\newcommand{\bx}{\boldsymbol{x}}
\newcommand{\la}{\lambda}
\newcommand{\de}{\delta}
\newcommand{\ep}{\epsilon}
\newcommand{\half}{\frac{1}{2}}
\date{\today}
\author{
Yuze Ge\thanks{School of Data Science, Fudan University, Shanghai, China. (\email{yzge23@m.fudan.edu.cn}).}
\and Rujun Jiang\thanks{Corresponding author. School of Data Science, Fudan University, Shanghai, China. (\email{rjjiang@fudan.edu.cn}).}
}
\title{SOREL: A Stochastic Algorithm for Spectral Risks Minimization}
\begin{document}

\maketitle
\begin{abstract}
The spectral risk has wide applications in machine learning, especially in real-world decision-making, where people are not only concerned with models' average performance. By assigning different weights to the losses of different sample points, rather than the same weights as in the empirical risk, it allows the model's performance to lie between the average performance and the worst-case performance. In this paper, we propose SOREL, the first stochastic gradient-based algorithm with convergence guarantees for the spectral risk minimization. Previous algorithms often consider adding a strongly concave function to smooth the spectral risk, thus lacking convergence guarantees for the original spectral risk. We theoretically prove that our algorithm achieves a near-optimal rate of $\widetilde{O}(1/\sqrt{\ep})$ in terms of $\ep$. Experiments on real datasets show that our algorithm outperforms existing algorithms in most cases, both in terms of runtime and sample complexity.
\end{abstract}

\section{Introduction}
In modern machine learning, model training heavily relies on minimizing the empirical risk. This ensures that the model have high average performance.
Given a training set of $n$ sample points $\mathcal{D}=\{{\boldsymbol{x}_i}\}_{i=1}^n\subset\mathcal{X}$, the goal of the empirical risk minimization is to solve
$$R(\bw) = (1/n)\sum_{i=1}^n \ell_i(\bw).$$
Here, $\bw\in\mathbb{R}^d$ is the parameter vector of the model, $\ell_i(\bw)=\ell(\bw,\boldsymbol{x}_i) $ is the loss of the $i$-th sample, and $\ell:\mathbb{R}^d\times \mathcal{X}\to \mathbb{R}$ is the loss function. However, as machine learning models are deployed in real-world scenarios, the evaluation metrics for these models become more diverse, including factors such as fairness or risk aversion.

In this paper, we consider a generalized aggregation loss function: the spectral risk, which is of the form
$$R_{\bsig} (\bw) = \sum_{i=1}^n \sigma_i \ell_{[i]}(\bw).$$
Here $\ell_{[1]}(\cdot) \leq \dots \leq \ell_{[n]}(\cdot)$ denotes the order statistics of the empirical loss distribution, and $0\leq\sigma_1 \leq \dots \leq \sigma_n, \sum_{i=1}^n \sigma_i = 1.$

In form, the spectral risk penalizes the occurrence of extreme losses by assigning greater weights to extreme losses. When $\sigma_i = 1/n$, the spectral risk reduces to the empirical risk. When $\sigma_n = 1$ and $\sigma_i = 0$ for $i = 1, \dots, n-1$, the spectral risk becomes the maximum loss. Therefore, the spectral risk measures the model's performance between the average case and the worst case. By assigning different values to $\sigma_i$, the spectral risk encompasses a wide range of aggregated loss functions that have broad applications in fields such as machine learning and finance.
Common spectral risks include Conditional Value at Risk (CVaR) or the average of top-k loss~\citep{artzner1997thinking,rockafellar2000optimization}, Exponential Spectral Risk Measure (ESRM)~\citep{cotter2006extreme}, and Extremal Spectral Risk (Extremile)~\citep{daouia2019extremiles}. Their specific forms are shown in Table~\ref{tab:sigma}~\citep{mehta2022stochastic}.

\begin{table}[t]
\caption{
Different spectral risks and the corresponding weights.}
\label{tab:sigma}
\begin{center}
\begin{tabular}{lcl}
\toprule
\multicolumn{1}{c}{\bf Spectral Risks} &\multicolumn{1}{c}{\bf Parameter}  &\multicolumn{1}{c}{ $\sigma_i$}
\\ \hline \\
$\alpha$-CVaR   & $0<\alpha<1$     & \hspace{0.5em} $\begin{cases}\frac{1}{n \alpha}, & i> \lceil n(1-\alpha)\rceil \\ 1-\frac{\lfloor n \alpha\rfloor}{n \alpha}, &\lfloor n(1-\alpha)\rfloor<i<\lceil n(1-\alpha)\rceil\\ 0, &\text{otherwise}\end{cases}$\\
$\rho$-ESRM     &$\rho>0$        &\quad $e^{-\rho}\left(e^{\rho\frac{i}{n}}-e^{\rho\frac{i-1}{n}}\right)/\left(1-e^{-\rho}\right)$\\
$r$-Extremile   & $r\geq 1$         &\hspace{0.7em}$\left(\frac{i}{n}\right)^r-\left(\frac{i-1}{n}\right)^r$ \\
\bottomrule
\end{tabular}
\end{center}
\end{table}

Despite the broad applications of spectral risks, optimization methods for spectral risks are still limited. In particular, for large-scale optimization problems, there is currently a lack of stochastic algorithms with convergence guarantees for the spectral risk minimization.
Indeed, the weight of each sample point depend on the entire training set, introducing complex dependencies and thus making the optimization process challenging.
Existing algorithms either abandon the convergence guarantee to the minimum of the spectral risk problem due to the difficulty of obtaining unbiased subgradient estimates~\citep{levy2020largescale,kawaguchi2020ordered}, or turn to optimize the smooth regularized spectral risk~\citep{mehta2024distributionally,mehta2022stochastic}, which lacks convergence guarantees for the original spectral risk. Given the widespread application of the spectral risk in machine learning and the lack of stochastic algorithms for the spectral risk minimization, we are committed to developing stochastic algorithms with convergence guarantees for the spectral risk minimization.

\paragraph{Our Contributions.}

  In this paper, we study \textbf{S}tochastic \textbf{O}ptimization for Spectral \textbf{R}isks with traj\textbf{e}ctory Stabi\textbf{l}ization (SOREL). i) We propose SOREL, the first stochastic algorithm with convergence guarantees for the spectral risk minimization. In particular, SOREL stabilizes the trajectory of the primal variable to the optimal point. ii) Theoretically, we prove that SOREL achieves a near-optimal rate of $\widetilde{O}(1/\sqrt{\ep})$ in terms of $\ep$ for spectral risks with a strongly convex regularizer. This matches the known lower bound of $\Omega(1/\sqrt{\ep})$ in the deterministic setting~\citep{ouyang2021lower}. iii) Experimentally, SOREL outperforms existing baselines in most cases, both in terms of runtime and sample complexity.

\section{Related work}

\paragraph{Statistical Properties of the Spectral Risk.} \  As a type of risk measures, the spectral risk assigns higher weights to the tail distribution and has been profoundly studied in the financial field~\citep{artzner1999coherent,rockafellar2013fundamental,he2022risk}. Recently, statistical properties of the spectral risk have been investigated by many works in the field of learning theory. Specifically, \cite{mehta2022stochastic,undefined2022Wasserstein} demonstrated that the discrete form of spectral risks converges to the spectral risk of the overall distribution, controlled by the Wasserstein distance. \cite{holland2022spectral,khim2020uniform,holland2021learning} also considered the learning bounds of spectral risks. 

\paragraph{Applications.} \  The spectral risk is widely applied in various fields of finance and machine learning. In some real-world tasks, the worst-case loss is as important as the average-case loss, such as medical imaging~\citep{xu2022clinicalrealistic} or robotics \citep{sharma2020riskaware}. The spectral risk minimization can be viewed as a risk-averse learning strategy. In the domain of fair machine learning, different subgroups are classified by sensitive features (e.g., gender and race). Subgroups with higher losses may be treated unfairly in decision-making. By penalizing samples with higher losses, the model's performance across different subgroups can meet certain fairness criteria~\citep{williamson2019fairness}, such as demographic parity \citep{dwork2012fairness} and equalized odds \citep{hardt2016equality}.
In the field of distributionally robust optimization, the sample distribution may deviate from a uniform distribution, which can be modeled by reweighting the samples \citep{chen2020distributionally}. \cite{mehta2024distributionally} adopts the polytope as the uncertainty set of the shifted distribution, which is similar to the form of the spectral risk minimization.

In practical applications, we can choose different types of spectral risks based on actual needs. For example, CVaR is popular in the context of portfolio optimization~\citep{rockafellar2000optimization}, as well as reinforcement learning~\citep{zhang2024cvarconstrained,chow2017riskconstrained}. Other applications of spectral risks include reducing test errors and  mitigating the impact of outliers~\citep{maurer2021robust, kawaguchi2020ordered,fan2017learning}, to name a few.

\paragraph{Existing Optimization Methods.} \ There have been many algorithms to optimize CVaR, a special case of spectral risks, including both deterministic \citep{rockafellar2000optimization} and stochastic algorithms \citep{fan2017learning,curi2020adaptive}. For the spectral risk, deterministic methods such as subgradient methods have convergence guarantees, although they are considered algorithms with slow convergence rate. \citet{xiao2023unified} proposed an Alternating Direction Method of Multipliers (ADMM) type method for the minimization of the rank-based loss.
 Other deterministic methods reformulate this problem into a minimax problem~\citet{thekumparampil2019efficient,hamedani2021primaldual,khalafi2023accelerated}. However, these methods require calculating
$O(n)$ function values and gradients at each iteration, posing significant limitations when solving large-scale problems.

Stochastic algorithms for solving the spectral risk minimization problems are still limited. Some algorithms forgo convergence to the true minimum of the spectral risk \citep{levy2020largescale,kawaguchi2020ordered}. Other methods modify the objective to minimize a smooth approximation of the spectral risk by adding a strongly concave term with a coefficient $\nu$ \citep{mehta2022stochastic,mehta2024distributionally}. The smaller $\nu$ is, the closer the approximation is to the original spectral risk. \cite{mehta2024distributionally} proposed the Prospect algorithm and proved that it achieves linear convergence for any
$\nu>0$. Furthermore, if the loss of each sample is different at the optimal point, then the optimal value of the smooth approximation of the spectral risk is the same as the optimal value of the original spectral risk
as long as
$\nu$ is below a certain positive threshold. However, in practice, these conditions are difficult to verify. The convergence of these algorithms still lacks guarantees for the original spectral risk minimization.
Other stochastic algorithms include \citet{hamedani2023stochastic,yan2019stochastic}, which have a slower convergence rate of $O(1/\ep)$ in terms of $\ep$. In this paper, we propose SOREL for the original spectral risk minimization problems and achieve a near-optimal convergence rate in terms of
 $\ep$.

\section{Algorithm}
We consider stochastic optimization of the spectral risk combined with a strongly convex regularizer:
\begin{equation}\label{eq:problem}
    \begin{aligned}
        \min_{\bw} \underbrace{\sum_{i=1}^n \sigma_i \ell_{[i]}(\bw)}_{R_{\bsig}(\bw)}+g(\bw).
    \end{aligned}
\end{equation}

Firstly, we make the basic assumption about the individual loss function $\ell_i$ and the regularizer $g$.
\begin{assumption}\label{assumption:basic}
    The individual loss function $\ell_i:\mathbb{R}^d\to\mathbb{R}$ is convex, $G$-Lipschitz continuous and $L$-smooth for all $i\in\{1,\dots,n\}$. The regularizer $g:\mathbb{R}^d\to\mathbb{R}\cup\{\infty\}$ is proper, lower semicontinuous and $\mu$-strongly convex.
\end{assumption}
Assumption~\ref{assumption:basic} is common in the literature on stochastic optimization~\citep{nemirovski2009robust,davis2019stochastic}, especially in the field of the spectral risk minimization~\citep{kawaguchi2020ordered,holland2022spectral,levy2020largescale,mehta2022stochastic, mehta2024distributionally}. Both the logistic loss and the least-square loss within bounded sublevel sets satisfy this assumption. The $L_2$ regularization, which is one of the most common regularizers in machine learning, satisfies the assumption on $g$.

\subsection{Challenges of Stochastic Optimization for Spectral Risks}\label{sec:alg-prob}

In this section, we describe the challenges of the spectral risk minimization problem and the techniques to solve them.

\paragraph{Biases of Stochastic Subgradient Estimators.}
From convex analysis~\citep[Lemma 10]{wang2023proximal}, we know that the subgradient of $R_{\bsig}$ is
$$\partial R_{\bsig}(\bw)=\operatorname{Conv}\left\{\bigcup_{\pi}\left\{\sum_{i=1}^n\sigma_i\nabla \ell_{\pi(i)}(\bw):\ell_{\pi(1)}(\bw)\leq\dots\leq \ell_{\pi(n)}(\bw)\right\}\right\},$$
where $\operatorname{Conv}$ denotes the convex hull of a set, and $\pi$ is a permutation that arranges $\ell_1,\dots,\ell_n$ in ascending order. Note that $R_{\bsig}(\bw)$ is non-smooth. Indeed, when there exist $i\ne j$ such that $\ell_i(\bw)=\ell_j(\bw)$, $\partial R_{\bsig}(\bw)$ contains multiple elements.

The subgradient of $R_{\bsig}$ is related to the ordering of $\ell_1, ..., \ell_n$. We cannot obtain an unbiased subgradient estimator of $\partial R_{\bsig}$ if we use only a mini-batch with $m$ $(m<n)$ sample points. For example, when $m = 1$, we randomly sample $i$ uniformly from $\{1,\dots,n\}$. The subgradient estimator $\nabla \ell_i(\bw)$ is unbiased only if $\sigma_i = 1/n$.
For general $\sigma$, unfortunately, to obtain an unbiased subgradient estimator of $\partial R_{\bsig}$,
we have to compute $n$ loss function values and then determine the ranking of $\ell_i$ among the $n$ losses (or the weight corresponding to the $i$-th sample point). However, computing $O(n)$ losses at each step is computationally heavy. To remedy this, we next design an algorithm that first use a minimax reformulation of problem \eqref{eq:problem} and then alternates updates $\blam$ and $\bw$ using a primal-dual method.

Equivalently, we can rewrite $R_{\bsig}(\bw)$ in the following form
\begin{equation}\label{eq:spectral_risk_reform}
    \begin{aligned}
        R_{\bsig}(\bw) = \max_{\blam\in \Pi_{\bsig}} \sum_{i=1}^n \lambda_i \ell_{i}(\bw),
    \end{aligned}
\end{equation}
where $\Pi_{\bsig}= \{\Pi\bsig: \Pi \mathbf{1}=\mathbf{1}, \Pi^{\top} \mathbf{1}=\mathbf{1}, \Pi \in[0,1]^{n \times n}\}$ is the permutahedron  associated with $\bsig$, i.e., the convex hull of all permutations of $\bsig$, and $\mathbf{1}$ is the all-one vector~\citep{blondel2020fast}.  Then Problem (\ref{eq:problem}) can be rewritten as
\begin{equation}\label{eq:problem_reform}
    \begin{aligned}
        \min_{\bw}\max_{\blam\in \Pi_{\bsig}} L(\w,\blam)=\sum_{i=1}^n \lambda_i \ell_{i}(\bw)+g(\bw).
    \end{aligned}
\end{equation}

Next, we use a primal-dual method to solve Problem (\ref{eq:problem_reform}). Specifically, we iteratively update $\bw$ and $\blam$:
\begin{equation}\label{eq:lam_sub_prob}
    \blam_{k+1}=\underset{\blam\in\Pi_{\bsig}}{\arg\max}\sum_{i=1}^n \lam_i\ell_i(\bw_k)-\frac{1}{2\eta_k}\|\blam-\blam_k\|^2,
\end{equation}
\begin{equation}\label{eq:w_sub_prob}
    \bw_{k+1}=\underset{\bw}{\arg\min}P_k(\bw):=\sum_{i=1}^n\lam_{i,k+1}\ell_i(\bw)+g(\bw)+\frac{1}{2\tau_k}\|\bw-\bw_k\|^2.
\end{equation}

Steps (\ref{eq:lam_sub_prob}) and (\ref{eq:w_sub_prob}) can be seen as alternatingly solving the min problem and the max problem in (\ref{eq:problem_reform}) with proximal terms.

\paragraph{Stabilizing the Optimization Trajectory.}
To update $\blam_{k+1}$, one may naturally think of solving Problem~(\ref{eq:spectral_risk_reform}):  $\blam_{k+1}=\arg\max_{\blam\in\Pi_{\bsig}}\sum_{i=1}^n\lam_i\ell_i(\bw_k)$, similar to methods in \citet{mehta2022stochastic,mehta2024distributionally} with smoothing coefficient $\nu=0$. However, since Problem (\ref{eq:spectral_risk_reform}) is merely convex, the solution
$\blam$ lacks continuity with respect to $\bw$, that is, a small change in $\bw$ could lead to a large change in $\blam$. Indeed, it is often the case that there are multiple optimal solutions for (\ref{eq:spectral_risk_reform}) when there exists $i\ne j$ such that $\ell_i(\bw)=\ell_j(\bw)$, and in this case, an arbitrary small perturbation of $\bw$ will lead to a different value of $\lambda_i$.
{\ As shown in Figure~\ref{fig:pre-stable}, this can cause
$\bw$ to oscillate near points where some losses are the same and prevents the convergence of the algorithm. We also provide a toy example in Appendix~\ref{apx:example} to further illustrate this difficulty.} Therefore, the proximal term $\frac{1}{2\eta_k} \|\blam-\blam_k\|^2$ is added in (\ref{eq:lam_sub_prob}) to prevent excessive changes in $\blam$ and stabilize the trajectory of the primal variable, where $\eta_k>0$ controls the extent of its variation.

\paragraph{Stochastic Optimization for the Primal Variable.}
We use a stochastic algorithm to approximately solve (\ref{eq:w_sub_prob}). Through the minimax reformulation in (\ref{eq:w_sub_prob}), we avoid directly calculating the stochastic subgradient of $R_{\bsig}(\bw)$, which requires computing all loss function values to obtain the corresponding sample weight $\lam_i$. Additionally, since $\blam_{k+1}$ is fixed, the finite sum part of the objective function in (\ref{eq:w_sub_prob}) is smooth, allowing us to use variance reduction (VR), a commonly used technique in stochastic optimization~\citep{shalev2013stochastic,roux2012stochastic,johnson2013accelerating,defazio2014saga}, to accelerate our stochastic algorithm. In contrast, since $R_{\bsig}(\bw)$ is non-smooth, as previously mentioned, VR cannot be used to directly solve Problem (\ref{eq:problem}). For smooth convex functions in the form of the finite sum, many methods such as SVRG~\citep{johnson2013accelerating}, SAGA~\citep{defazio2014saga}, and SARAH~\citep{nguyen2017sarah} can enable stochastic methods to achieve the convergence rate of deterministic methods. We apply the proximal
stochastic gradient descent with a generalized VR method inspired by SVRG to approximately solve (\ref{eq:w_sub_prob}), which will be presented in Section~\ref{sec:alg-algorithm} in detail. Thanks to its strong convexity, Problem (\ref{eq:w_sub_prob}) can be solved efficiently.

Similar to (\ref{eq:lam_sub_prob}), we add a proximal term $\frac{1}{2\tau_k}\|\bw-\bw^k\|^2$ in (\ref{eq:w_sub_prob}) where $\tau_k>0$ is the proximal parameter. The proximal parameter $\tau_k$ is crucial for the convergence proof of our algorithm.  By carefully choosing $\tau_k=O(1/k)$, the updates of $\bw$ become more stringent as the algorithm progresses, and SOREL can achieve a near optimal rate of $\widetilde{O}(1/\sqrt{\ep})$ in terms of $\ep$.

\begin{figure}[t]
    \centering
    \includegraphics[width=0.6\linewidth]{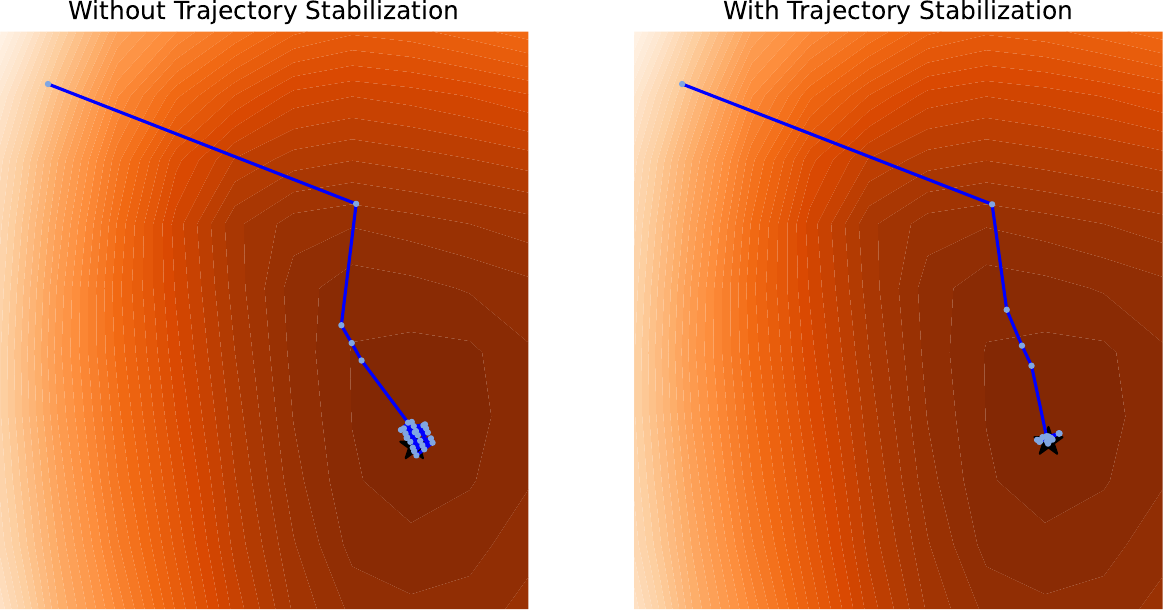}
    \caption{The level set plot of 2D least-square regression with primal-dual optimization trajectories described in Section~\ref{sec:alg-prob}. The max subproblem does not have a proximal term (\textbf{left}) or has a proximal term (\textbf{right}). The min subproblem does not have a proximal term. The black star represents the optimal point. The sample points are obtained by projecting the \texttt{yacht} dataset onto $\mathbb{R}^2$ using PCA.}
    \label{fig:pre-stable}
\end{figure}

\subsection{The SOREL Algorithm}\label{sec:alg-algorithm}
Our proposed algorithm SOREL is summarized in Algorithm~\ref{alg:main_alg}. The specific values for the parameters $\theta_k,\eta_k,\tau_k$ and $m_k$ in  Algorithm~\ref{alg:main_alg} will be given in Section~\ref{sec:theory}. In Line~\ref{algline: inital_lam} the algorithm initializes $\blam_0$ by solving Problem (\ref{eq:spectral_risk_reform}). In Lines \ref{algline:inner_loop_start}-\ref{algline:w_update}, the algorithm computes the stochastic gradient and update $\bw$ for a fixed $\blam$, as described in Section~\ref{sec:alg-prob}. Additionally, we compute the full gradient of $\bw$ every $m_k$ updates to reduce the variance. In Lines \ref{algline:moment}-\ref{algline:update_lam}, we update $\blam$. Note that we replace $\ell_i(\bw_k)$ with $\ell_i(\bw_k)+\theta_k\left(\ell_i(\bw_k)-\ell_i(\bw_{k-1})\right)$ to accelerate the algorithm. This can be seen as a momentum term, a widely used technique in smooth optimization~\citep{tseng1998incremental,liu2020improved,gitman2019understanding,sutskever2013importance}, where $\theta_k>0$ is the momentum parameter.

\begin{algorithm}[H]
\caption{SOREL}
\label{alg:main_alg}
\begin{algorithmic}[1]
\REQUIRE initial $\bw_0$, $\bw_{-1}=\bw_0$, $\bsig$, and learning rate $\alpha$, $\{\theta_k\}_{k=0}^{K-1}$, $\{\eta_k\}_{k=0}^{K-1}$, $\{\tau_k\}_{k=0}^{K-1}$, $\{m_k\}_{k=0}^{K-1}$ and $\{T_k\}_{k=0}^{K-1}$.
\STATE $\blam_0=\arg\min_{\blam\in\Pi_{\bsig}}-\bell(\bw_0)^\top\blam$. \label{algline: inital_lam}
\FORALL{$k=0,\dots,K-1$}\label{algline:inner_loop}
    \STATE $\bv_k=(1+\theta_k)\bell(\bw_k)-\theta_k\bell(\bw_{k-1})$. \label{algline:moment}
    \STATE $\blam_{k+1}=\arg\min_{\blam\in\Pi_{\bsig}} -\bv_k^\top\blam+\frac{1}{2\eta_k}\|\blam-\blam^k\|^2$.\label{algline:update_lam}
    \STATE $\bw_{k,0}=\bw_k$, $\Bar{\bw}=\bw_k$.
    \STATE $\bar{\boldsymbol{g}}=\sum_{i=1}^n\lambda_{i,k+1}\nabla\ell_i(\bar{\bw})$.
    \FORALL{$t=1,\dots,T_k$}\label{algline:inner_loop_start}
        \IF{$t \operatorname{mod} m_k =0$}\label{algline:inner_inner_loop_start}
            \STATE $\Bar{\bw}=\frac{1}{m_k}\sum_{j=t-m_k+1}^t\bw_{k,j}$.\label{algline:primal_epoch_average}
            \STATE $\bar{\boldsymbol{g}}=\sum_{i=1}^n\lambda_{i,k+1}\nabla
            \ell_i(\bar{\bw})$.\label{algline:primal_epoch_fullgrad}
        \ENDIF
        \STATE Sample $i_t$ uniformly from $\{1,\dots,n\}$,
        \STATE $\boldsymbol{d}_{k,t}=n\lambda_{i_t,k+1}\nabla \ell_{i_t}(\bw_{k,t})-n\lambda_{i_t,k+1}\nabla \ell_{i_t}(\Bar{\bw})+\bar{\boldsymbol{g}}$
        \STATE $\bw_{k,t+1}=\operatorname{Prox}_{\alpha \left(g+\frac{1}{2\tau_k}\|\cdot-\bw_k\|^2\right)} \left\{\bw_{k,t}-\alpha\boldsymbol{d}_{k,t}\right\}.$\label{algline:w_update}
        \ENDFOR\label{algline:inner_loop_end}
        \STATE $\bw_{k+1}=\frac{1}{m_k}\sum_{j=T_k-m_k+1}^{T_k}\bw_{k,j}$.\label{algline:inner_loop_output}
\ENDFOR
\ENSURE $\bw_K$.
\end{algorithmic}
\end{algorithm}

Define the proximal operator $\operatorname{prox}_h(\bar{\boldsymbol{x}}):=\arg\min_{\boldsymbol{x}} h(\boldsymbol{x})+\frac{1}{2}\|\boldsymbol{x}-\bar{\boldsymbol{x}}\|^2$ for a function $h$. In Line~\ref{algline:w_update}, we apply the proximal 
stochastic gradient descent step. We assume that $\operatorname{prox}_{g+\frac{1}{2}\|\cdot\|^2}(\cdot)$ is easy to compute, which is the case for many commonly used regularizers $g$, such as the 
$l_1$ norm. If $g$ is differentiable, we can replace the proximal stochastic gradient with stochastic gradient:  $\bw_{k,t+1}=\bw_{k,t}-\alpha\left(\boldsymbol{d}_{k,t}+\frac{1}{\tau_k}\left(\bw_{k,t}-\bw_k\right)+\nabla g(\bw_{k,t})\right).$
{This will not affect the convergence or convergence rate of the algorithm as long as $\nabla g$ is Lipschitz continuous and the step size $\alpha$ is small enough}.
In Line \ref{algline:update_lam}, we need to compute the projection onto $\Pi_{\bsig}$. For an ordered vector, projecting onto the permutahedron takes $O(n)$ operations using the Pool Adjacent Violators Algorithm (PAVA)\citep{lim2016efficient}. In SOREL, we need to first sort $n$ elements of the projected vector and then compute the projection onto $\Pi_{\bsig}$, which takes a total of $O(n \log n)$ operations.

 In practice, we set $T_k$ and $m_k$ to $n$ in Lines~\ref{algline:inner_loop_start} and \ref{algline:inner_inner_loop_start}, meaning the algorithm updates $\blam$ once it traverses the training set. 
We also set the reference point $\bar{\bw}$ and the output $\bw_{k+1}$ in Lines~\ref{algline:primal_epoch_average} and \ref{algline:inner_loop_output} to be the last vector of the previous epoch rather than the average vector, as with most practical algorithms~\citep{johnson2013accelerating,zhu2016variance,cutkosky2019momentumbased,babanezhad2015stopwasting,gower2020variancereduced}.
In this way, SOREL only requires computing the full batch gradient once for each update of $\blam$, and becomes single-loop in Lines~\ref{algline:inner_loop_start}-~\ref{algline:inner_loop_end}. This makes the algorithm more concise and parameters easier to tune.

\section{Theoretical Analysis}\label{sec:theory}

For convenience, we consider that $T_k$ (will be determined in Theorem~\ref{thm:main-thm-paper}) is large enough so that $\w_k$ is a $\delta_k$-optimal solution of $P_k(\w)$, that is, $\ex_k P_k(\w\+)-\min_{\w}P_k(\w)\leq\delta_k$. Here, $\ex_k$ represents the conditional expectation with respect to the random sample points used to compute $\w\+$ given $\w\0,\dots,\w_0$. Then, we can provide a one-step analysis of the outer loop of SOREL. We use $L(\bw,\blam)=\blam^\top\bell(\bw)+g(\bw)$ in the analysis for simplicity.

\begin{lem}\label{lem:one-step-paper}
    Suppose Assumption~\ref{assumption:basic} holds. Let $\{\bw\0\}$ and $\{\blam\0\}$ be the sequences generated by Algorithm~\ref{alg:main_alg}. Then for any $\bw\in\mathbb{R}^d$ and $\blam\in \Pi_{\bsig}$, the following inequality holds,
\begin{equation}\label{eq:one-step}
    \begin{aligned}
        &\ex_k \, \left\{L(\bw\+,\blam)-L(\bw,\blam\+)\right\}\\
        \leq &\ex_k\left\{\langle\blam-\blam\+,\bell(\bw\+)\rangle+\frac{1}{2\eta_k}\left[\|\blam-\blam\0\|^2-\|\blam-\blam\+\|^2-\|\blam\+-\blam\0\|^2\right]\right.\\
        &+\frac{1}{2\tau\0}\left[\|\bw-\bw\0\|^2-\|\bw-\bw\+\|^2-\|\bw\+-\bw\0\|^2\right]-\frac{\mu}{2}\|\bw-\bw\+\|^2\\
        &+ \left.\langle \bv\0,\blam\+ -\blam \rangle +\delta\0+\sqrt{\frac{(\tau_k^{-1}+\mu)\delta_k}{2}}(\|\bw-\bw\+\|^2+1)\right\}.
    \end{aligned}
\end{equation}
\end{lem}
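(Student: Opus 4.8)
The plan is to build \eqref{eq:one-step} from two separate "one-step" estimates — one coming from the dual update \eqref{eq:update_lam} and one from the (inexact) primal update \eqref{eq:w_sub_prob} — and then glue them together through the bilinear coupling term $\blam^\top\bell(\bw)$. For the dual side, since $\blam\+$ solves a strongly concave maximization over the convex set $\Pi_{\bsig}$ with the proximal term $\frac{1}{2\eta_k}\|\blam-\blam\0\|^2$, I would write the first-order optimality condition and use the standard three-point identity for proximal steps: for any $\blam\in\Pi_{\bsig}$,
\[
\langle \bv\0,\blam\+-\blam\rangle \;\ge\; \frac{1}{2\eta_k}\bigl(\|\blam-\blam\+\|^2+\|\blam\+-\blam\0\|^2-\|\blam-\blam\0\|^2\bigr).
\]
This is where the $\frac{1}{2\eta_k}[\cdots]$ bracket and the $\langle\bv\0,\blam\+-\blam\rangle$ term in \eqref{eq:one-step} originate. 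Rearranging and adding $\langle\blam-\blam\+,\bell(\bw\+)\rangle$ on both sides accounts for the fact that the dual step used the extrapolated loss vector $\bv\0=(1+\theta_k)\bell(\bw\0)-\theta_k\bell(\bw\m)$ rather than $\bell(\bw\+)$.

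For the primal side, I would first pretend $\bw\+$ exactly minimizes the strongly convex $P_k$, and then pay the inexactness $\delta_k$ at the end. If $\bw\+$ were the exact minimizer of $P_k(\bw)=L(\bw,\blam\+)+\frac{1}{2\tau\0}\|\bw-\bw\0\|^2$, then by $\mu$-strong convexity of $g$ (hence $(\mu+\tau_k^{-1})$-strong convexity of $P_k$), for any $\bw$,
\[
P_k(\bw)\;\ge\;P_k(\bw\+)+\tfrac{\mu+\tau_k^{-1}}{2}\|\bw-\bw\+\|^2,
\]
which after expanding $P_k$ and moving the proximal terms around yields
\[
L(\bw\+,\blam\+)-L(\bw,\blam\+)\;\le\;\frac{1}{2\tau\0}\bigl(\|\bw-\bw\0\|^2-\|\bw-\bw\+\|^2-\|\bw\+-\bw\0\|^2\bigr)-\frac{\mu}{2}\|\bw-\bw\+\|^2.
\]
Then I would replace $L(\bw\+,\blam\+)$ on the left by $L(\bw\+,\blam)$ using $L(\bw\+,\blam)-L(\bw\+,\blam\+)=\langle\blam-\blam\+,\bell(\bw\+)\rangle$, so the cross terms cancel against the dual-side bound when the two are added. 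Adding the dual inequality and the primal inequality, the $\langle\blam-\blam\+,\bell(\bw\+)\rangle$ terms combine into exactly the one appearing in \eqref{eq:one-step}, and we are left with precisely the right-hand side of \eqref{eq:one-step} minus the $\delta\0$ and $\sqrt{(\tau_k^{-1}+\mu)\delta_k/2}\,(\|\bw-\bw\+\|^2+1)$ corrections.

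The main obstacle, and the last step, is handling the inexactness of the primal solve rigorously: $\bw\+$ is only a $\delta_k$-optimal (in expectation) solution of $P_k$, so the clean strong-convexity inequality above does not hold exactly. I would control this by noting that a $\delta_k$-suboptimal point of a $(\mu+\tau_k^{-1})$-strongly convex function lies within $\sqrt{2\delta_k/(\mu+\tau_k^{-1})}$ of the true minimizer $\bw\+^{\star}$ (in expectation, via Jensen), and also that $|P_k(\bw\+)-P_k(\bw\+^{\star})|\le\delta_k$ directly. Substituting $\bw\+^{\star}$ for $\bw\+$ in the exact inequality and then bounding the difference $L(\bw\+,\blam)-L(\bw\+^{\star},\blam)$, the perturbations in $\|\bw\+-\bw\0\|^2$, and the perturbation in $\|\bw-\bw\+\|^2$ all in terms of $\delta_k$, $\|\bw-\bw\+\|$, Lipschitz/boundedness constants, and $\tau_k^{-1}+\mu$, produces the error terms $\delta\0$ and $\sqrt{(\tau_k^{-1}+\mu)\delta_k/2}\,(\|\bw-\bw\+\|^2+1)$; the "$+1$" absorbs the linear-in-$\|\bw-\bw\+\|$ cross terms via Young's inequality. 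Taking $\ex_k$ throughout (the dual quantities $\blam\0,\bw\0,\bw\m,\bv\0,\blam\+$ are $\mathcal F_k$-measurable, so only the primal inexactness interacts with the expectation) gives \eqref{eq:one-step}. I expect the bookkeeping that turns the raw $\delta_k$-ball estimate into exactly the stated form — especially matching the constant $\sqrt{(\tau_k^{-1}+\mu)/2}$ — to be the fiddly part, while the structural identities for the two proximal steps are routine.
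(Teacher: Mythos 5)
Your decomposition is exactly the paper's: the paper packages both the dual three-point inequality and the inexact primal strong-convexity bound into a single auxiliary lemma (Lemma~\ref{lem:auxi}, applied with $\ep=0$ for the $\blam$-step and $\ep=\delta_k$ for the $\bw$-step), glues them through $\langle\blam-\blam\+,\bell(\bw\+)\rangle$ just as you do, and handles the inexactness precisely as you sketch, via $\frac{\mu+\tau_k^{-1}}{2}\ex_k\|\bw\+-\bw\+^\star\|^2\le\delta_k$ followed by H\"older/Young to absorb the cross term into $\sqrt{(\tau_k^{-1}+\mu)\delta_k/2}\,(\|\bw-\bw\+\|^2+1)$. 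One small refinement to match the stated constants: control the function value at $\bw\+$ directly from $\ex_k P_k(\bw\+)\le\min_{\bw} P_k(\bw)+\delta_k$ (costing only $+\delta_k$), rather than bounding $L(\bw\+,\blam)-L(\bw\+^\star,\blam)$ by Lipschitz continuity, which would introduce an extra $G\sqrt{\delta_k}$-type term that does not appear in \eqref{eq:one-step}.
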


Next, we try to telescope the terms on the right hand side of (\ref{eq:one-step}) by multiplying each term by $\gamma_k$.
By choosing appropriate parameters in Algorithm~\ref{alg:main_alg} to satisfy some conditions (will be discussed in Appendix~\ref{apx:proof}), we can ensure that the adjacent terms indexed by $k=0,\dots,K-1$ can be canceled out during summation. Then we can achieve the desired convergence result.

\begin{theo}\label{thm:main-thm-paper}
    Suppose Assumption~\ref{assumption:basic} holds.
    Set $\gamma_k=k+1,~\eta_k=\frac{\mu(k+1)}{8G^2},~\theta_k=\frac{k}{k+1},~\tau_k=\frac{4}{\mu (k+1)},~    \delta_k = \min \left(\frac{\mu}{8(k+5)},
        \mu(k+1)^{-6}\right)$,
 the step-size $\alpha=\frac{1}{12L}$, $m_k=\frac{384L}{(k+5)\mu}+2$ and $T_k=O(m_k\log\frac{1}{\delta_k})$ in Algorithm~\ref{alg:main_alg}. Then we have
     $$
     \ex \|\bw_K-\bw^\star\|^2=O\left(\frac{G^2}{\mu^2K^2}\right).$$
\end{theo}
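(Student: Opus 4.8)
\textbf{Proof plan for Theorem~\ref{thm:main-thm-paper}.}
The plan is to start from the one-step inequality in Lemma~\ref{lem:one-step-paper}, multiply it by the weights $\gamma_k = k+1$, and sum over $k = 0, \dots, K-1$, aiming for a telescoping cancellation of the proximal terms. Concretely, I would first substitute $\bw = \bw^\star$ and $\blam = \blam^\star$ (the saddle point of $L$) into \eqref{eq:one-step}, so that the left-hand side becomes $\ex_k\{L(\bw\+,\blam^\star) - L(\bw^\star,\blam\+)\}$, which is nonnegative by the saddle-point property and in fact lower-bounds a quantity controlling $\|\bw\+ - \bw^\star\|^2$ once we exploit $\mu$-strong convexity of $L(\cdot,\blam^\star)$. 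The key bookkeeping step is to check the telescoping conditions: for the $\blam$-proximal terms we need $\gamma_k/\eta_k$ to line up with $\gamma_{k+1}/\eta_{k+1}$ against the residual $\|\blam\+ - \blam\0\|^2$ terms, and similarly for the $\bw$-proximal terms the coefficient $\gamma_k(\tau_k^{-1} + \mu)$ should dominate $\gamma_{k+1}\tau_{k+1}^{-1}$ so that the $\|\bw - \bw\+\|^2$ terms collapse and leave a clean $\frac{\gamma_{K-1}}{2}(\tau_{K-1}^{-1} + \mu)\ex\|\bw^\star - \bw_K\|^2$ on the left after rearrangement. With $\tau_k = \frac{4}{\mu(k+1)}$ one has $\tau_k^{-1} = \frac{\mu(k+1)}{4}$, so $\gamma_k\tau_k^{-1} = \frac{\mu(k+1)^2}{4}$ grows quadratically, which is exactly what yields the $O(1/K^2)$ rate.

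The cross terms $\langle \blam - \blam\+, \bell(\bw\+)\rangle$ and $\langle \bv\0, \blam\+ - \blam\rangle$ must be handled together: writing $\bv\0 = (1+\theta_k)\bell(\bw\0) - \theta_k\bell(\bw\m)$, their sum telescopes into differences of $\langle \bell(\bw\+) - \bell(\bw\0), \blam\+ - \blam\rangle$-type terms across consecutive iterations, up to an error controlled by the Lipschitz bound $\|\bell(\bw\+) - \bell(\bw\0)\| \le G\sqrt{n}\,\|\bw\+ - \bw\0\|$ (or a per-coordinate $G$ bound), which is then absorbed into the negative $\|\bw\+ - \bw\0\|^2$ terms coming from the $\bw$-proximal part — this is where the constant $\eta_k = \frac{\mu(k+1)}{8G^2}$ and the choice of $\theta_k = \frac{k}{k+1} = \gamma_{k}/\gamma_{k+1}$ (so $\gamma_k\theta_k = \gamma_{k-1}$) are calibrated. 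Finally, the inexactness terms $\delta\0 + \sqrt{\frac{(\tau_k^{-1}+\mu)\delta_k}{2}}(\|\bw - \bw\+\|^2 + 1)$ must sum to something of lower order: with $\gamma_k = k+1$, $\tau_k^{-1} + \mu = O(\mu k)$, and $\delta_k = O(\mu k^{-6})$, we get $\gamma_k\sqrt{(\tau_k^{-1}+\mu)\delta_k} = O(k \cdot \sqrt{\mu k \cdot \mu k^{-6}}) = O(\mu k^{-3/2})$, so $\sum_k \gamma_k\sqrt{(\tau_k^{-1}+\mu)\delta_k/2}$ converges and the coefficient multiplying $\|\bw - \bw\+\|^2$ stays a negligible fraction of the $\frac{\gamma_k\mu}{2}$ available from strong convexity (so those terms can be dropped), while $\sum_k \gamma_k\delta_k = O(\sum_k (k+1)\cdot \mu k^{-6}) = O(\mu)$ and $\sum_k \gamma_k\sqrt{\cdots} = O(\mu)$ as well.

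Assembling these pieces, after summation the left side is at least $\frac{\gamma_{K-1}}{2}(\tau_{K-1}^{-1}+\mu)\,\ex\|\bw_K - \bw^\star\|^2 = \Omega(\mu K^2)\,\ex\|\bw_K - \bw^\star\|^2$, and the right side is bounded by the initial terms $\frac{1}{2\eta_0}\|\blam^\star - \blam_0\|^2 + \frac{1}{2\tau_0}\|\bw^\star - \bw_0\|^2 + O(G^2/\mu)$ plus the $O(\mu)$ residuals, hence $O(G^2/\mu)$ after accounting for $\eta_0^{-1} = 8G^2/\mu$ and noting $\|\blam^\star - \blam_0\|^2 = O(1)$ since $\Pi_{\bsig}$ has bounded diameter. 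Dividing through gives $\ex\|\bw_K - \bw^\star\|^2 = O\!\left(\frac{G^2}{\mu^2 K^2}\right)$. I expect the main obstacle to be the simultaneous handling of the momentum/cross terms $\langle \blam - \blam\+, \bell(\bw\+)\rangle + \langle \bv\0, \blam\+ - \blam\rangle$ together with verifying that the leftover error from the Lipschitz estimate is genuinely dominated by the available negative quadratic terms $-\frac{\gamma_k}{2\tau_k}\|\bw\+ - \bw\0\|^2$ and $-\frac{\gamma_k}{2\eta_k}\|\blam\+ - \blam\0\|^2$ uniformly in $k$ — this is the delicate parameter-matching at the heart of the proof, and it is presumably where the conditions deferred to Appendix~\ref{apx:proof} are used; the $T_k = O(m_k \log(1/\delta_k))$ requirement is then just the number of VR inner iterations needed to reach accuracy $\delta_k$, which follows from linear convergence of the inner solver on the $\mu$-strongly convex subproblem $P_k$.
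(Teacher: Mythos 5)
Your plan is correct and follows essentially the same route as the paper: weight the one-step inequality of Lemma~\ref{lem:one-step-paper} by $\gamma_k=k+1$, absorb the momentum cross-terms $\langle\blam-\blam\+,\bell(\bw\+)\rangle+\langle\bv\0,\blam\+-\blam\rangle$ into telescoping differences plus Young/Lipschitz remainders killed by the negative quadratic proximal terms, verify the same parameter-matching conditions (your $\gamma_k\theta_k=\gamma_{k-1}$, $\gamma_{k+1}/\eta_{k+1}\le\gamma_k/\eta_k$, $\gamma_{k+1}/\tau_{k+1}\le\gamma_k(\tau_k^{-1}+\mu-\cdots)$ are exactly the paper's Condition in Appendix~\ref{apx:proof}), bound the inexactness sums using $\delta_k=O(\mu(k+1)^{-6})$, and invoke the saddle-point property with $(\bw^\star,\blam^\star)$ to drop the left-hand side. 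The only cosmetic slip is writing $\theta_k=\gamma_k/\gamma_{k+1}$ where you mean $\theta_k=\gamma_{k-1}/\gamma_k$ (your parenthetical $\gamma_k\theta_k=\gamma_{k-1}$ is the correct relation).
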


\begin{coro}
     Under the same conditions in Theorem~\ref{thm:main-thm-paper}, we obtain an output $\bw_K$ of Algorithm~\ref{alg:main_alg} such that $\ex\|\bw_K-\bw^\star\|^2\leq\epsilon$ in a total sample complexity of  $O\left(n\frac{G}{\mu\sqrt{\epsilon}}\log\frac{G}{\mu^2\sqrt{\epsilon}}+\frac{L}{\mu}\log\frac{G}{\mu\sqrt{\epsilon}}\log\frac{G}{\mu^2\sqrt{\epsilon}}\right)$.
\end{coro}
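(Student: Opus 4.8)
The plan is to bootstrap the rate in Theorem~\ref{thm:main-thm-paper} into an iteration count and then add up the per‑outer‑iteration oracle cost of Algorithm~\ref{alg:main_alg} under the prescribed parameter schedule. First I would fix the number of outer iterations: since Theorem~\ref{thm:main-thm-paper} gives $\ex\|\bw_K-\bw^\star\|^2=O(G^2/(\mu^2K^2))$, it suffices to take $K=\Theta\!\big(G/(\mu\sqrt{\epsilon})\big)$ to guarantee $\ex\|\bw_K-\bw^\star\|^2\le\epsilon$. All the logarithmic factors appearing below are then of the form $\log K=\Theta\!\big(\log(G/(\mu\sqrt{\epsilon}))\big)$ or $\log(K/\mu)=\Theta\!\big(\log(G/(\mu^2\sqrt{\epsilon}))\big)$, so the final expression will come out in terms of these quantities.

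Next I would count the calls to the first‑order oracle (evaluations of $\ell_i$ or $\nabla\ell_i$) incurred during outer iteration $k$. There are three sources: (i) forming $\bv_k$ in Line~\ref{algline:moment} and the initial reference gradient $\bar{\boldsymbol{g}}$ costs $O(n)$ (the term $\bell(\bw_{k-1})$ is carried over from the previous iteration); (ii) each of the $T_k$ inner steps computes $O(1)$ stochastic gradients, for a total of $O(T_k)$; and (iii) the periodic full‑gradient refresh in Line~\ref{algline:primal_epoch_fullgrad} is triggered $\lfloor T_k/m_k\rfloor$ times, each costing $O(n)$, for a total of $O(nT_k/m_k)$. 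The projection onto $\Pi_{\bsig}$ in Line~\ref{algline:update_lam} costs $O(n\log n)$ arithmetic operations but no extra oracle calls and is lower order, so I would only remark on it. Plugging in $T_k=O(m_k\log(1/\delta_k))$ gives $T_k/m_k=O(\log(1/\delta_k))$; since $\delta_k=\Theta\!\big(\min(\mu/k,\mu/k^6)\big)$ we have $\log(1/\delta_k)=\Theta\!\big(\log(k/\mu)\big)$, and with $m_k=\Theta\!\big(L/(k\mu)\big)+2$ the cost of outer iteration $k$ is $O\!\big((n+L/(k\mu))\log(k/\mu)\big)$.

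Finally I would sum over $k=0,\dots,K-1$. Bounding $\log(k/\mu)\le\log(K/\mu)$ and using $\sum_{k=1}^{K-1}1/k=O(\log K)$, the total is $O\!\big(nK\log(K/\mu)+(L/\mu)\log K\,\log(K/\mu)\big)$ (the contribution of the additive constant $2$ in $m_k$ sums to $O(K\log(K/\mu))$, which is absorbed by the first term). Substituting $K=\Theta\!\big(G/(\mu\sqrt{\epsilon})\big)$ yields the claimed $O\!\big(n\frac{G}{\mu\sqrt{\epsilon}}\log\frac{G}{\mu^2\sqrt{\epsilon}}+\frac{L}{\mu}\log\frac{G}{\mu\sqrt{\epsilon}}\log\frac{G}{\mu^2\sqrt{\epsilon}}\big)$.

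The one point that is not pure bookkeeping, and which I expect to be the main obstacle, is the implicit claim inside the parameter schedule that $T_k=O(m_k\log(1/\delta_k))$ inner iterations really do make $\bw_{k+1}$ a $\delta_k$‑optimal solution of $P_k$ in expectation. This is the linear‑convergence guarantee for the proximal SVRG‑type inner solver applied to the $\mu$‑strongly convex, $O(L)$‑smooth finite sum $P_k$ (whose strong convexity includes the extra $\tfrac{1}{2\tau_k}\|\cdot-\bw_k\|^2$ term); the $\log(1/\delta_k)$ factor is the number of epochs of length $m_k=\Theta(L/(k\mu))$ needed to shrink the optimality gap of $P_k$ below $\delta_k$. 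I would invoke this from the analysis supporting Theorem~\ref{thm:main-thm-paper}; once it is in hand the corollary follows by the summation above.
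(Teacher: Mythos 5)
Your proposal is correct and follows essentially the same route as the paper: bound the per‑outer‑iteration oracle cost by $O\bigl((n+\tfrac{L}{\mu(k+1)})\log(1/\delta_k)\bigr)$ (the paper gets this directly from Lemma~\ref{lem:w_subproblem} with $\tfrac{L\tau_k}{\mu\tau_k+1}=\tfrac{4L}{\mu(k+5)}$, while you re-derive it by itemizing the SVRG epochs, but the count is the same), sum over $k$ using $\sum_k 1/k=O(\log K)$, and substitute $K=\Theta(G/(\mu\sqrt{\epsilon}))$ from Theorem~\ref{thm:main-thm-paper}. The inner-solver guarantee you flag as the only non-bookkeeping step is exactly what the paper supplies via Lemma~\ref{lem:w_subproblem}.
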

Our algorithm achieves a near-optimal convergence rate of $\widetilde{O}(1/\sqrt{\ep})$ in terms of $\ep$, which matches the lower bound of $\Omega(1/\sqrt{\ep})$ in the deterministic setting up to a logarithmic term~\citep{ouyang2021lower}. This is the first near-optimal method for solving the spectral risk minimization. Previously, \citet{mehta2022stochastic,mehta2024distributionally} add a strongly concave term with respect to $\blam$ in $L(\bw,\blam)$ and achieve a linear convergence rate for the perturbed problem.
One may set the coefficient of the strongly concave term $\nu$ to $O(\ep)$, obtaining an $\ep$-optimal solution for the original spectral risk minimization problem. However, this approach has drawbacks: it leads to a worse sample complexity of $\widetilde{O}(1/\ep)$~\citep{palaniappan2016stochastic} or even $\widetilde{O}(1/\ep^3)$~\citep{mehta2024distributionally}; additionally, to achieve an $\ep$-optimal solution, the step size would need to be set to $O(\ep)$, resulting in very small steps that perform poorly in practice. In contrast,  SOREL's step size is independent of $\ep$.

\begin{remark}
\normalfont 
    In Lines~\ref{algline:primal_epoch_average} and \ref{algline:inner_loop_output} of Algorithm~\ref{alg:main_alg}, we set the reference point $\bar{\bw}$ and the output $\bw_{k+1}$ to the average of the previous epoch. Instead, we can {also} set them to be the last vector of the previous epoch, which aligns with practical implementation. For theoretical completeness, we may compute the full gradient $\bar{\boldsymbol{g}}$ in Line~\ref{algline:primal_epoch_fullgrad} at each step $t$ with probability $p$ instead of once per epoch (every $m_k$ steps), as done in~\citep{kulunchakov2019estimate, hofmann2015variance, kovalev2020dont}. However, these methods are beyond the scope of this paper.
\end{remark}

\section{Experiments}

In this section, we compare our proposed algorithm SOREL with existing baselines for solving the spectral risk minimization problem.

We consider the least squares regression problem using a linear model with $L_2$ regularization. We adopt a wide variety of spectral risks, including ESRM, Extremile, and CVaR. Five tabular regression benchmarks are used for the least squares loss: \texttt{yacht}~\citep{tsanas2012accurate}, \texttt{energy}~\citep{baressi2019artificial}, \texttt{concrete}~\citep{yeh2006analysis}, \texttt{kin8nm}~\citep{akujuobi2017delve}, \texttt{power}~\citep{tüfekci2014prediction}.
We compare the suboptimality versus passes (the number of samples divided by $n$) and runtime.
The suboptimality is defined as
$$
\text{Suboptimality}(\bw_k)=\frac{R_{\bsig}(\bw_k)+g(\bw_k)-R_{\bsig}(\bw^\star)-g(\bw^\star)}{R_{\bsig}(\bw_0)+g(\bw_0)-R_{\bsig}(\bw^\star)-g(\bw^\star)},$$
where $\bw^\star$ is calculated by L-BFGS~\citep{nocedal1999numerical}.

Baseline methods include SGD~\citep{levy2020largescale,mehta2022stochastic} with a minibatch size of 64, LSVRG~\citep{mehta2022stochastic}, and Prospect~\citep{mehta2024distributionally}. Note that although both LSVRG and Prospect add a strongly concave term with coefficient $\nu$ to smooth the original spectral risk, they have been observed to exhibit linear convergence for the original spectral risk minimization problem in practice without the strongly concave term~\citep{mehta2022stochastic,mehta2024distributionally}. Consequently, we set $\nu=0$ in our experiments. By~\citep{mehta2024distributionally}, if the losses at the optimal point are different from each other, then as long as $\nu$ is below a certain positive threshold, the optimal solutions of the smoothed and original spectral risks minimization problems are the same. For LSVRG, we set the length of an epoch to $n$. For SOREL, we set $T_k=m_k=n$, and in all experiments, the values of $\tau_k$ are fixed and $\theta_k=k/(k+1)$. Thus, our algorithm has only two hyperparameters $\eta_k$ and the learning rate $\alpha$ to tune. {We apply stochastic gradient descent to solve (\ref{eq:w_sub_prob}) instead of proximal stochastic gradient descent.}
We also empirically compare different algorithms with minibatching, where the batch size for all algorithms are set to 64.
Detailed experimental settings are provided in Appendix~\ref{apx:exp-detail}.

\begin{figure}[ht]
    \centering
    \includegraphics[width=\linewidth]{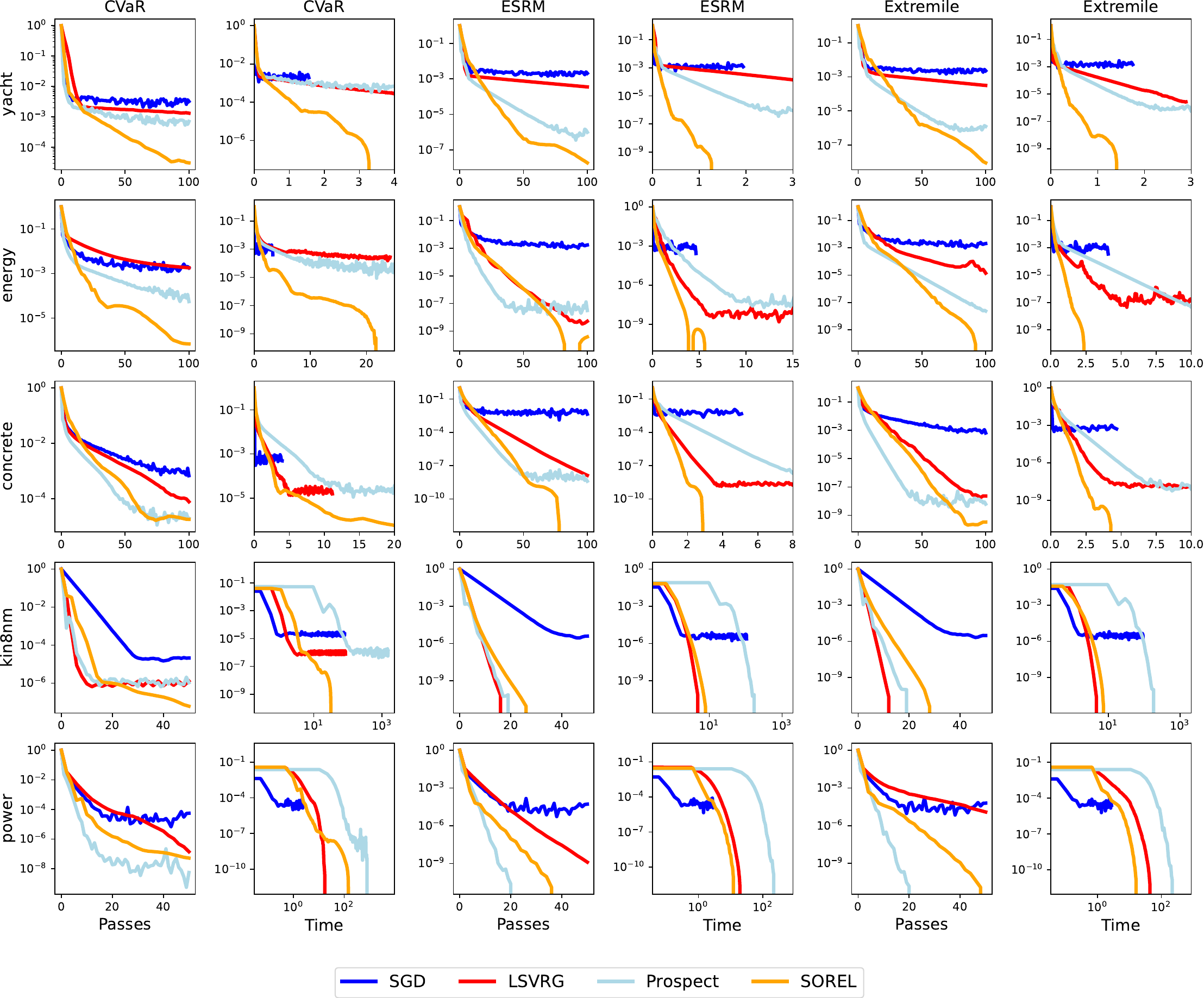}
    \caption{Suboptimality of spectral risks for different algorithms \textbf{without mini-batching}. The $x$-axis represents the effective number of samples used by the algorithm divided by $n$ (odd columns) or CPU time (even columns). Each row corresponds to the same dataset, and each column corresponds to the same type of the spectral risk.}
    \label{fig:regression-single}
\end{figure}

\paragraph{Results.}
\begin{figure}[t]
    \centering
    \includegraphics[width=\linewidth]{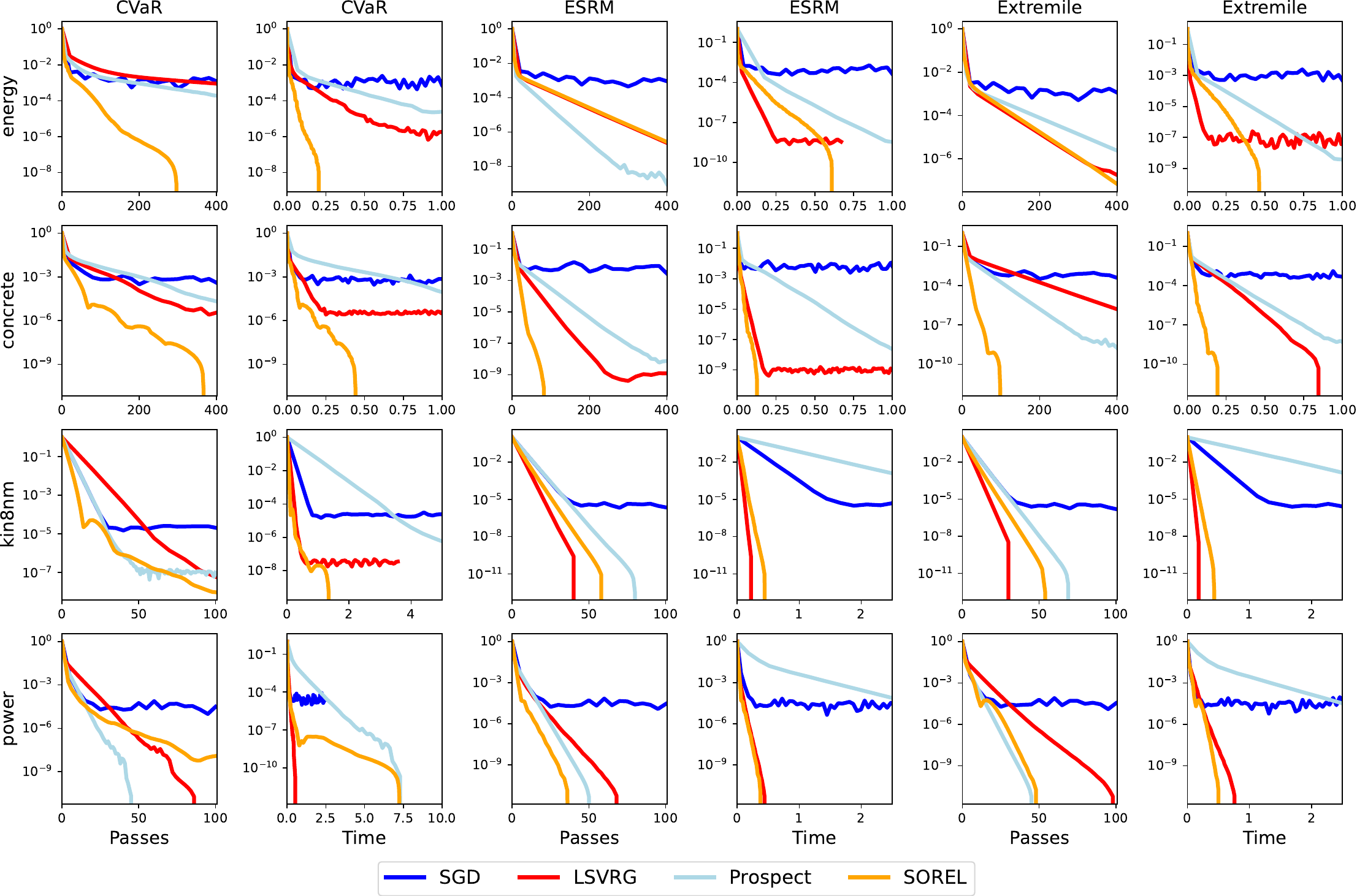}
    \caption{Suboptimality of spectral risks for different algorithms \textbf{with} \textbf{mini-batching}. The $x$-axis represents the effective number of samples used by the algorithm divided by $n$ (odd columns) or CPU time (even columns).}
    \label{fig:regression-batch}
\end{figure}

Figure~\ref{fig:regression-single} compares the training curves of our method with other baselines across various datasets and the spectral risk settings.
In terms of sample complexity and runtime, SOREL outperforms other baselines in most cases; SOREL also achieve comparable results in the \texttt{kin8nm} dataset.
In the \texttt{power} dataset, the sample complexity of Prospect is better than that of SOREL. However, the runtime of SOREL is significantly shorter than that of Prospect due to the fact that Prospect needs the calculation of projections onto the permutahedron with $O(n)$ operations each step. 
As expected, SGD fails to converge due to its inherent bias~\citep{mehta2022stochastic,levy2020largescale}. Although \citet{mehta2024distributionally} discusses the equivalence of minimizing the smoothed spectral risk and the original spectral risk when losses at the optimal point are different from each other, we find that LSVRG and Prospect often fail to reach the true optimal point, indicating limitations of these methods. In contrast, SOREL converges to the true optimal point in all settings (suboptimality less than 0 means the solution's accuracy is higher than L-BFGS).

Additionally, in Figure~\ref{fig:regression-batch}, we present the empirical results of the algorithms with minibatching. Minibatching has a significant improvement on the convergence rate of all the algorithms, even though we have not proven the convergence of them.
Similar to what is shown in Figure~\ref{fig:regression-single},
SGD, LSVEG and Prospect fail to converge to the true optimal points, especially in the first two datasets.
SOREL converges to the optimal solutions in all settings, and achieve the best or competitive results, in terms of sample complexity, runtime, or both, except in the setting of CVaR and \texttt{power} dataset. Still SOREL performs competitively for an accuracy of $10^{-7}$ in this setting.

\section{Conclusion}
We have proposed SOREL, the first stochastic algorithm with convergence guarantees for the spectral risk minimization problems. We have proved that SOREL achieves a near-optimal rate of $\widetilde{O}(1/\sqrt{\ep})$ in terms of $\ep$. In experiments, SOREL outperforms existing baselines in terms of sample complexity and runtime in most cases. The SOREL algorithm with minibatching also empirically achieves significant rate improvements.

Future work includes exploring convergence of SOREL for nonconvex problems, and investigating broader applications of the spectral risk in areas such as fairness and distributionally robust optimization.

\bibliographystyle{abbrvnat}
\bibliography{REFERENCES}

\begin{thebibliography}{63}
\providecommand{\natexlab}[1]{#1}
\providecommand{\url}[1]{\texttt{#1}}
\expandafter\ifx\csname urlstyle\endcsname\relax
  \providecommand{\doi}[1]{doi: #1}\else
  \providecommand{\doi}{doi: \begingroup \urlstyle{rm}\Url}\fi

\bibitem[A. and Bhat(2022)]{undefined2022Wasserstein}
P.~L. A. and S.~P. Bhat.
\newblock A {Wasserstein} {Distance} {Approach} for {Concentration} of {Empirical} {Risk} {Estimates}.
\newblock \emph{Journal of Machine Learning Research (JMLR)}, 23:\penalty0 238:1--238:61, 2022.

\bibitem[Akujuobi and Zhang(2017)]{akujuobi2017delve}
U.~Akujuobi and X.~Zhang.
\newblock Delve: A {Dataset}-{Driven} {Scholarly} {Search} and {Analysis} {System}.
\newblock \emph{SIGKDD Explorations}, 19\penalty0 (2):\penalty0 36--46, 2017.

\bibitem[Artzner(1997)]{artzner1997thinking}
P.~Artzner.
\newblock Thinking coherently.
\newblock \emph{Risk}, 10:\penalty0 68--71, 1997.

\bibitem[Artzner et~al.(1999)Artzner, Delbaen, Eber, and Heath]{artzner1999coherent}
P.~Artzner, F.~Delbaen, J.-M. Eber, and D.~Heath.
\newblock Coherent measures of risk.
\newblock \emph{Mathematical finance}, 9\penalty0 (3):\penalty0 203--228, 1999.

\bibitem[Babanezhad et~al.(2015)Babanezhad, Ahmed, Virani, Schmidt, Konecn{\' y}, and Sallinen]{babanezhad2015stopwasting}
R.~Babanezhad, M.~O. Ahmed, A.~Virani, M.~Schmidt, J.~Konecn{\' y}, and S.~Sallinen.
\newblock Stopwasting {My} {Gradients}: Practical {SVRG}.
\newblock In \emph{Conference on {Neural} {Information} {Processing} {Systems} ({NeurIPS})}, pages 2251--2259, 2015.

\bibitem[Baressi~{\v{S}}egota et~al.(2019)Baressi~{\v{S}}egota, An{\dj}eli{\'c}, Kudl{\'a}{\v{c}}ek, and {\v{C}}ep]{baressi2019artificial}
S.~Baressi~{\v{S}}egota, N.~An{\dj}eli{\'c}, J.~Kudl{\'a}{\v{c}}ek, and R.~{\v{C}}ep.
\newblock Artificial neural network for predicting values of residuary resistance per unit weight of displacement.
\newblock \emph{Pomorski zbornik}, 57\penalty0 (1):\penalty0 9--22, 2019.

\bibitem[Blondel et~al.(2020)Blondel, Teboul, Berthet, and Djolonga]{blondel2020fast}
M.~Blondel, O.~Teboul, Q.~Berthet, and J.~Djolonga.
\newblock Fast {Differentiable} {Sorting} and {Ranking}.
\newblock In \emph{International {Conference} on {Machine} {Learning} ({ICML})}, pages 950--959, 2020.

\bibitem[Boob et~al.(2024)Boob, Deng, and Lan]{boob2024level}
D.~Boob, Q.~Deng, and G.~Lan.
\newblock Level constrained first order methods for function constrained optimization.
\newblock \emph{Mathematical Programming}, 2024.

\bibitem[Chen and Paschalidis(2020)]{chen2020distributionally}
R.~Chen and I.~C. Paschalidis.
\newblock Distributionally robust learning.
\newblock \emph{Foundations and Trends{\textregistered} in Optimization}, 4\penalty0 (1-2):\penalty0 1--243, 2020.

\bibitem[Chow et~al.(2017)Chow, Ghavamzadeh, Janson, and Pavone]{chow2017riskconstrained}
Y.~Chow, M.~Ghavamzadeh, L.~Janson, and M.~Pavone.
\newblock Risk-{Constrained} {Reinforcement} {Learning} with {Percentile} {Risk} {Criteria}.
\newblock \emph{Journal of Machine Learning Research (JMLR)}, 18:\penalty0 167:1--167:51, 2017.

\bibitem[Cotter and Dowd(2006)]{cotter2006extreme}
J.~Cotter and K.~Dowd.
\newblock Extreme spectral risk measures: An application to futures clearinghouse margin requirements.
\newblock \emph{Journal of Banking \& Finance}, 30\penalty0 (12):\penalty0 3469--3485, 2006.

\bibitem[Curi et~al.(2020)Curi, Levy, Jegelka, and Krause]{curi2020adaptive}
S.~Curi, K.~Y. Levy, S.~Jegelka, and A.~Krause.
\newblock Adaptive {Sampling} for {Stochastic} {Risk}-{Averse} {Learning}.
\newblock In \emph{Conference on {Neural} {Information} {Processing} {Systems} ({NeurIPS})}, 2020.

\bibitem[Cutkosky and Orabona(2019)]{cutkosky2019momentumbased}
A.~Cutkosky and F.~Orabona.
\newblock Momentum-{Based} {Variance} {Reduction} in {Non}-{Convex} {SGD}.
\newblock In \emph{Conference on {Neural} {Information} {Processing} {Systems} ({NeurIPS})}, pages 15210--15219, 2019.

\bibitem[Daouia et~al.(2019)Daouia, Gijbels, and Stupfler]{daouia2019extremiles}
A.~Daouia, I.~Gijbels, and G.~Stupfler.
\newblock Extremiles: A {New} {Perspective} on {Asymmetric} {Least} {Squares}.
\newblock \emph{Journal of the American Statistical Association}, 114\penalty0 (527):\penalty0 1366--1381, 2019.

\bibitem[Davis and Drusvyatskiy(2019)]{davis2019stochastic}
D.~Davis and D.~Drusvyatskiy.
\newblock Stochastic {Model}-{Based} {Minimization} of {Weakly} {Convex} {Functions}.
\newblock \emph{SIAM Journal on Optimization}, 29\penalty0 (1):\penalty0 207--239, 2019.

\bibitem[Defazio et~al.(2014)Defazio, Bach, and Lacoste-Julien]{defazio2014saga}
A.~Defazio, F.~R. Bach, and S.~Lacoste-Julien.
\newblock Saga: A {Fast} {Incremental} {Gradient} {Method} {With} {Support} for {Non}-{Strongly} {Convex} {Composite} {Objectives}.
\newblock In \emph{Conference on {Neural} {Information} {Processing} {Systems} ({NeurIPS})}, pages 1646--1654, 2014.

\bibitem[Dwork et~al.(2012)Dwork, Hardt, Pitassi, Reingold, and Zemel]{dwork2012fairness}
C.~Dwork, M.~Hardt, T.~Pitassi, O.~Reingold, and R.~S. Zemel.
\newblock Fairness through awareness.
\newblock In \emph{Innovations in {Theoretical} {Computer} {Science} ({ITCS})}, pages 214--226, 2012.

\bibitem[Fan et~al.(2017)Fan, Lyu, Ying, and Hu]{fan2017learning}
Y.~Fan, S.~Lyu, Y.~Ying, and B.-G. Hu.
\newblock Learning with {Average} {Top}-k {Loss}.
\newblock In \emph{Conference on {Neural} {Information} {Processing} {Systems} ({NeurIPS})}, pages 497--505, 2017.

\bibitem[Gitman et~al.(2019)Gitman, Lang, Zhang, and Xiao]{gitman2019understanding}
I.~Gitman, H.~Lang, P.~Zhang, and L.~Xiao.
\newblock Understanding the {Role} of {Momentum} in {Stochastic} {Gradient} {Methods}.
\newblock In \emph{Conference on {Neural} {Information} {Processing} {Systems} ({NeurIPS})}, pages 9630--9640, 2019.

\bibitem[Gower et~al.(2020)Gower, Schmidt, Bach, and Richt{\' a}rik]{gower2020variancereduced}
R.~M. Gower, M.~Schmidt, F.~R. Bach, and P.~Richt{\' a}rik.
\newblock Variance-{Reduced} {Methods} for {Machine} {Learning}.
\newblock \emph{Proceedings of the IEEE}, 108\penalty0 (11):\penalty0 1968--1983, 2020.

\bibitem[Hamedani and Aybat(2021)]{hamedani2021primaldual}
E.~Y. Hamedani and N.~S. Aybat.
\newblock A {Primal}-{Dual} {Algorithm} with {Line} {Search} for {General} {Convex}-{Concave} {Saddle} {Point} {Problems}.
\newblock \emph{SIAM Journal on Optimization}, 31\penalty0 (2):\penalty0 1299--1329, 2021.

\bibitem[Hamedani and Jalilzadeh(2023)]{hamedani2023stochastic}
E.~Y. Hamedani and A.~Jalilzadeh.
\newblock A stochastic variance-reduced accelerated primal-dual method for finite-sum saddle-point problems.
\newblock \emph{Computational Optimization and Applications}, 85\penalty0 (2):\penalty0 653--679, 2023.

\bibitem[Hardt et~al.(2016)Hardt, Price, and Srebro]{hardt2016equality}
M.~Hardt, E.~Price, and N.~Srebro.
\newblock Equality of {Opportunity} in {Supervised} {Learning}.
\newblock In \emph{Conference on {Neural} {Information} {Processing} {Systems} ({NeurIPS})}, pages 3315--3323, 2016.

\bibitem[He et~al.(2022)He, Kou, and Peng]{he2022risk}
X.~D. He, S.~Kou, and X.~Peng.
\newblock Risk measures: robustness, elicitability, and backtesting.
\newblock \emph{Annual Review of Statistics and Its Application}, 9\penalty0 (1):\penalty0 141--166, 2022.

\bibitem[Hofmann et~al.(2015)Hofmann, Lucchi, Lacoste-Julien, and McWilliams]{hofmann2015variance}
T.~Hofmann, A.~Lucchi, S.~Lacoste-Julien, and B.~McWilliams.
\newblock Variance {Reduced} {Stochastic} {Gradient} {Descent} with {Neighbors}.
\newblock In \emph{Conference on {Neural} {Information} {Processing} {Systems} ({NeurIPS})}, pages 2305--2313, 2015.

\bibitem[Holland and Haress(2021)]{holland2021learning}
M.~J. Holland and E.~M. Haress.
\newblock Learning with risk-averse feedback under potentially heavy tails.
\newblock In \emph{International {Conference} on {Artificial} {Intelligence} and {Statistics} ({AISTATS})}, pages 892--900, 2021.

\bibitem[Holland and Haress(2022)]{holland2022spectral}
M.~J. Holland and E.~M. Haress.
\newblock Spectral risk-based learning using unbounded losses.
\newblock In \emph{International {Conference} on {Artificial} {Intelligence} and {Statistics} ({AISTATS})}, pages 1871--1886, 2022.

\bibitem[Johnson and Zhang(2013)]{johnson2013accelerating}
R.~Johnson and T.~Zhang.
\newblock Accelerating {Stochastic} {Gradient} {Descent} using {Predictive} {Variance} {Reduction}.
\newblock In \emph{Conference on {Neural} {Information} {Processing} {Systems} ({NeurIPS})}, pages 315--323, 2013.

\bibitem[Kawaguchi and Lu(2020)]{kawaguchi2020ordered}
K.~Kawaguchi and H.~Lu.
\newblock Ordered {SGD}: A {New} {Stochastic} {Optimization} {Framework} for {Empirical} {Risk} {Minimization}.
\newblock In \emph{International {Conference} on {Artificial} {Intelligence} and {Statistics} ({AISTATS})}, pages 669--679, 2020.

\bibitem[Khalafi and Boob(2023)]{khalafi2023accelerated}
M.~Khalafi and D.~Boob.
\newblock Accelerated {Primal}-{Dual} {Methods} for {Convex}-{Strongly}-{Concave} {Saddle} {Point} {Problems}.
\newblock In \emph{International {Conference} on {Machine} {Learning} ({ICML})}, pages 16250--16270, 2023.

\bibitem[Khim et~al.(2020)Khim, Leqi, Prasad, and Ravikumar]{khim2020uniform}
J.~Khim, L.~Leqi, A.~Prasad, and P.~Ravikumar.
\newblock Uniform {Convergence} of {Rank}-weighted {Learning}.
\newblock In \emph{International {Conference} on {Machine} {Learning} ({ICML})}, pages 5254--5263, 2020.

\bibitem[Kovalev et~al.(2020)Kovalev, Horv{\' a}th, and Richt{\' a}rik]{kovalev2020dont}
D.~Kovalev, S.~Horv{\' a}th, and P.~Richt{\' a}rik.
\newblock Don't {Jump} {Through} {Hoops} and {Remove} {Those} {Loops}: Svrg and {Katyusha} are {Better} {Without} the {Outer} {Loop}.
\newblock In \emph{International {Conference} on {Algorithmic} {Learning} {Theory} ({ALT})}, pages 451--467, 2020.

\bibitem[Kulunchakov and Mairal(2019)]{kulunchakov2019estimate}
A.~Kulunchakov and J.~Mairal.
\newblock Estimate {Sequences} for {Variance}-{Reduced} {Stochastic} {Composite} {Optimization}.
\newblock In \emph{International {Conference} on {Machine} {Learning} ({ICML})}, pages 3541--3550, 2019.

\bibitem[Levy et~al.(2020)Levy, Carmon, Duchi, and Sidford]{levy2020largescale}
D.~Levy, Y.~Carmon, J.~C. Duchi, and A.~Sidford.
\newblock Large-{Scale} {Methods} for {Distributionally} {Robust} {Optimization}.
\newblock In \emph{Conference on {Neural} {Information} {Processing} {Systems} ({NeurIPS})}, 2020.

\bibitem[Lim and Wright(2016)]{lim2016efficient}
C.~H. Lim and S.~J. Wright.
\newblock Efficient {Bregman} {Projections} onto the {Permutahedron} and {Related} {Polytopes}.
\newblock In \emph{International {Conference} on {Artificial} {Intelligence} and {Statistics} ({AISTATS})}, pages 1205--1213, 2016.

\bibitem[Liu et~al.(2020)Liu, Gao, and Yin]{liu2020improved}
Y.~Liu, Y.~Gao, and W.~Yin.
\newblock An {Improved} {Analysis} of {Stochastic} {Gradient} {Descent} with {Momentum}.
\newblock In \emph{Conference on {Neural} {Information} {Processing} {Systems} ({NeurIPS})}, 2020.

\bibitem[Maurer et~al.(2021)Maurer, Parletta, Paudice, and Pontil]{maurer2021robust}
A.~Maurer, D.~A. Parletta, A.~Paudice, and M.~Pontil.
\newblock Robust {Unsupervised} {Learning} via {L}-statistic {Minimization}.
\newblock In \emph{International {Conference} on {Machine} {Learning} ({ICML})}, pages 7524--7533, 2021.

\bibitem[Mehta et~al.(2024)Mehta, Roulet, Pillutla, and Harchaoui]{mehta2024distributionally}
R.~Mehta, V.~Roulet, K.~Pillutla, and Z.~Harchaoui.
\newblock Distributionally {Robust} {Optimization} with {Bias} and {Variance} {Reduction}.
\newblock \emph{The Twelfth International Conference on Learning Representations}, abs/2310.13863, 2024.

\bibitem[Mehta et~al.(2022)Mehta, Roulet, Pillutla, Liu, and Harchaoui]{mehta2022stochastic}
R.~R. Mehta, V.~Roulet, K.~Pillutla, L.~Liu, and Z.~Harchaoui.
\newblock Stochastic {Optimization} for {Spectral} {Risk} {Measures}.
\newblock In \emph{International {Conference} on {Artificial} {Intelligence} and {Statistics}}, pages 10112--10159, 2022.

\bibitem[Nemirovski et~al.(2009)Nemirovski, Juditsky, Lan, and Shapiro]{nemirovski2009robust}
A.~Nemirovski, A.~Juditsky, G.~Lan, and A.~Shapiro.
\newblock Robust stochastic approximation approach to stochastic programming.
\newblock \emph{SIAM Journal on optimization}, 19\penalty0 (4):\penalty0 1574--1609, 2009.

\bibitem[Nguyen et~al.(2017)Nguyen, Liu, Scheinberg, and Tak{\' a}c]{nguyen2017sarah}
L.~M. Nguyen, J.~Liu, K.~Scheinberg, and M.~Tak{\' a}c.
\newblock Sarah: A {Novel} {Method} for {Machine} {Learning} {Problems} {Using} {Stochastic} {Recursive} {Gradient}.
\newblock In \emph{International {Conference} on {Machine} {Learning} ({ICML})}, pages 2613--2621, 2017.

\bibitem[Nocedal and Wright(1999)]{nocedal1999numerical}
J.~Nocedal and S.~J. Wright.
\newblock \emph{Numerical optimization}.
\newblock Springer, 1999.

\bibitem[Ouyang and Xu(2021)]{ouyang2021lower}
Y.~Ouyang and Y.~Xu.
\newblock Lower complexity bounds of first-order methods for convex-concave bilinear saddle-point problems.
\newblock \emph{Mathematical Programming}, 185\penalty0 (1-2):\penalty0 1--35, 2021.

\bibitem[Palaniappan and Bach(2016)]{palaniappan2016stochastic}
B.~Palaniappan and F.~R. Bach.
\newblock Stochastic {Variance} {Reduction} {Methods} for {Saddle}-{Point} {Problems}.
\newblock In \emph{Conference on {Neural} {Information} {Processing} {Systems} ({NeurIPS})}, pages 1408--1416, 2016.

\bibitem[Rockafellar and Uryasev(2000)]{rockafellar2000optimization}
R.~T. Rockafellar and S.~Uryasev.
\newblock Optimization of conditional value-at-risk.
\newblock \emph{The Journal of Risk}, 2\penalty0 (3):\penalty0 21--41, 2000.

\bibitem[Rockafellar and Uryasev(2013)]{rockafellar2013fundamental}
R.~T. Rockafellar and S.~Uryasev.
\newblock The fundamental risk quadrangle in risk management, optimization and statistical estimation.
\newblock \emph{Surveys in Operations Research and Management Science}, 18\penalty0 (1-2):\penalty0 33--53, 2013.

\bibitem[Roux et~al.(2012)Roux, Schmidt, and Bach]{roux2012stochastic}
N.~L. Roux, M.~Schmidt, and F.~R. Bach.
\newblock A {Stochastic} {Gradient} {Method} with an {Exponential} {Convergence} {Rate} for {Finite} {Training} {Sets}.
\newblock In \emph{Conference on {Neural} {Information} {Processing} {Systems} ({NeurIPS})}, pages 2672--2680, 2012.

\bibitem[Shalev-Shwartz and Zhang(2013)]{shalev2013stochastic}
S.~Shalev-Shwartz and T.~Zhang.
\newblock Stochastic dual coordinate ascent methods for regularized loss minimization.
\newblock \emph{Journal of Machine Learning Research}, 14\penalty0 (1), 2013.

\bibitem[Sharma et~al.(2020)Sharma, Toubeh, Zhou, and Tokekar]{sharma2020riskaware}
V.~D. Sharma, M.~Toubeh, L.~Zhou, and P.~Tokekar.
\newblock Risk-{Aware} {Planning} and {Assignment} for {Ground} {Vehicles} using {Uncertain} {Perception} from {Aerial} {Vehicles}.
\newblock In \emph{IEEE/{RJS} {International} {Conference} on {Intelligent} {RObots} and {Systems} ({IROS})}, pages 11763--11769, 2020.

\bibitem[Sutskever et~al.(2013)Sutskever, Martens, Dahl, and Hinton]{sutskever2013importance}
I.~Sutskever, J.~Martens, G.~E. Dahl, and G.~E. Hinton.
\newblock On the importance of initialization and momentum in deep learning.
\newblock In \emph{International {Conference} on {Machine} {Learning} ({ICML})}, pages 1139--1147, 2013.

\bibitem[Thekumparampil et~al.(2019)Thekumparampil, Jain, Netrapalli, and Oh]{thekumparampil2019efficient}
K.~K. Thekumparampil, P.~Jain, P.~Netrapalli, and S.~Oh.
\newblock Efficient {Algorithms} for {Smooth} {Minimax} {Optimization}.
\newblock In \emph{Conference on {Neural} {Information} {Processing} {Systems} ({NeurIPS})}, pages 12659--12670, 2019.

\bibitem[Tsanas and Xifara(2012)]{tsanas2012accurate}
A.~Tsanas and A.~Xifara.
\newblock Accurate quantitative estimation of energy performance of residential buildings using statistical machine learning tools.
\newblock \emph{Energy and Buildings}, 49:\penalty0 560--567, 2012.

\bibitem[Tseng(1998)]{tseng1998incremental}
P.~Tseng.
\newblock An incremental gradient (-projection) method with momentum term and adaptive stepsize rule.
\newblock \emph{SIAM Journal on Optimization}, 8\penalty0 (2):\penalty0 506--531, 1998.

\bibitem[T{\" u}fekci(2014)]{tüfekci2014prediction}
P.~T{\" u}fekci.
\newblock Prediction of full load electrical power output of a base load operated combined cycle power plant using machine learning methods.
\newblock \emph{International Journal of Electrical Power \& Energy Systems}, 60:\penalty0 126--140, 2014.

\bibitem[Wang et~al.(2023)Wang, Jiang, Kong, and Balzano]{wang2023proximal}
P.~Wang, R.~Jiang, Q.~Kong, and L.~Balzano.
\newblock Proximal {DC} {Algorithm} for {Sample} {Average} {Approximation} of {Chance} {Constrained} {Programming}: Convergence and {Numerical} {Results}.
\newblock \emph{arXiv}, 2023.

\bibitem[Williamson and Menon(2019)]{williamson2019fairness}
R.~C. Williamson and A.~K. Menon.
\newblock Fairness risk measures.
\newblock In \emph{International {Conference} on {Machine} {Learning} ({ICML})}, pages 6786--6797, 2019.

\bibitem[Xiao and Zhang(2014)]{xiao2014proximal}
L.~Xiao and T.~Zhang.
\newblock A {Proximal} {Stochastic} {Gradient} {Method} with {Progressive} {Variance} {Reduction}.
\newblock \emph{SIAM Journal on Optimization}, 24\penalty0 (4):\penalty0 2057--2075, 2014.

\bibitem[Xiao et~al.(2023)Xiao, Ge, Jiang, and Yan]{xiao2023unified}
R.~Xiao, Y.~Ge, R.~Jiang, and Y.~Yan.
\newblock A {Unified} {Framework} for {Rank}-based {Loss} {Minimization}.
\newblock In \emph{Thirty-seventh {Conference} on {Neural} {Information} {Processing} {Systems}}, 2023.

\bibitem[Xu et~al.(2022)Xu, Myronenko, Yang, Roth, Zhao, Wang, and Xu]{xu2022clinicalrealistic}
Z.~Xu, A.~Myronenko, D.~Yang, H.~R. Roth, C.~Zhao, X.~Wang, and D.~Xu.
\newblock \emph{Clinical-{Realistic} {Annotation} for {Histopathology} {Images} with {Probabilistic} {Semi}-supervision: A {Worst}-{Case} {Study}}.
\newblock Springer Nature Switzerland, 2022.

\bibitem[Yan et~al.(2019)Yan, Xu, Lin, Zhang, and Yang]{yan2019stochastic}
Y.~Yan, Y.~Xu, Q.~Lin, L.~Zhang, and T.~Yang.
\newblock Stochastic {Primal}-{Dual} {Algorithms} with {Faster} {Convergence} than {O}(1/\textsurd{}{T}) for {Problems} without {Bilinear} {Structure}.
\newblock \emph{ArXiv}, abs/1904.10112, 2019.

\bibitem[Yeh(2006)]{yeh2006analysis}
I.-C. Yeh.
\newblock Analysis of {Strength} of {Concrete} {Using} {Design} of {Experiments} and {Neural} {Networks}.
\newblock \emph{Journal of Materials in Civil Engineering}, 18\penalty0 (4):\penalty0 597--604, 2006.

\bibitem[Zhang et~al.(2024)Zhang, Leng, Ma, Liu, Wang, Liang, Liu, and Yang]{zhang2024cvarconstrained}
Q.~Zhang, S.~Leng, X.~Ma, Q.~Liu, X.~Wang, B.~Liang, Y.~Liu, and J.~Yang.
\newblock Cvar-{Constrained} {Policy} {Optimization} for {Safe} {Reinforcement} {Learning}.
\newblock \emph{IEEE Transactions on Neural Networks and Learning Systems}, 2024.

\bibitem[Zhu and Hazan(2016)]{zhu2016variance}
Z.~A. Zhu and E.~Hazan.
\newblock Variance {Reduction} for {Faster} {Non}-{Convex} {Optimization}.
\newblock In \emph{International {Conference} on {Machine} {Learning} ({ICML})}, pages 699--707, 2016.

\end{thebibliography}

\newpage
\onecolumn
\begin{appendix}
\begin{center}
{\Large \bf Appendix}
\end{center}
\vspace{-0.1in}
\par\noindent\rule{\textwidth}{1pt}
\setcounter{section}{0}
\renewcommand\thesection{\Alph{section}}
\end{appendix}

\section{Proofs}\label{apx:proof}

First, we provide an auxiliary lemma. This is an extension of \citep[Lemma 8]{boob2024level}.

\begin{lem}\label{lem:auxi}
    Let $\Bar{\x}$ be an $\epsilon$-approximate solution of $\min_x \{g(\bx)+\frac{\la}{2}\|\bx-\hat{\bx}\|^2\}$ in expectation, where $g:\mathbb{R}^d\to\mathbb{R}$ is $\mu$-strongly convex, $\mu\geq 0$. Then
    $$
    \begin{aligned}
        \mathop{\mathbb{E}}\left\{g(\Bar{\bx})-g(\bx)\right\}\leq &\mathbb{E}\left\{\frac{\la}{2}\left[\|\bx-\hat{\bx}\|^2-\|\bx-\Bar{\bx}\|^2-\|\hat{\bx}-\Bar{\bx}\|^2\right]-\frac{\mu}{2}\|\bx-\Bar{\bx}\|^2\right\}\\
        &+\sqrt{\frac{(\la+\mu)\ep}{2}}\ex\|\Bar{\bx}-\bx\|^2+\sqrt{\frac{(\la+\mu)\ep}{2}}+\ep.
    \end{aligned}
    $$
\end{lem}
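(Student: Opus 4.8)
\textbf{Proof proposal for Lemma~\ref{lem:auxi}.}

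The plan is to start from the standard three-point inequality for the exact minimizer of a strongly convex proximal subproblem, and then carefully account for the inexactness of $\bar\bx$. Let $\bx^+ := \arg\min_\bx \{g(\bx) + \tfrac{\la}{2}\|\bx - \hat\bx\|^2\}$ denote the true proximal point. Since $g$ is $\mu$-strongly convex, the objective $\phi(\bx) := g(\bx) + \tfrac{\la}{2}\|\bx - \hat\bx\|^2$ is $(\la+\mu)$-strongly convex, so for the exact minimizer $\bx^+$ one has the optimality inequality
$$
\phi(\bx^+) + \frac{\la+\mu}{2}\|\bx - \bx^+\|^2 \le \phi(\bx) \qquad \text{for all } \bx.
$$
Expanding $\phi$ and rearranging gives the familiar three-point identity
$$
g(\bx^+) - g(\bx) \le \frac{\la}{2}\left[\|\bx - \hat\bx\|^2 - \|\bx - \bx^+\|^2 - \|\hat\bx - \bx^+\|^2\right] - \frac{\mu}{2}\|\bx - \bx^+\|^2,
$$
which is exactly the desired bound with $\bar\bx$ replaced by $\bx^+$ and the error terms dropped. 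So the work is entirely in controlling the deviation $\bar\bx - \bx^+$.

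Next I would quantify that deviation. Because $\phi$ is $(\la+\mu)$-strongly convex with minimizer $\bx^+$, we have $\phi(\bar\bx) - \phi(\bx^+) \ge \tfrac{\la+\mu}{2}\|\bar\bx - \bx^+\|^2$, and the hypothesis that $\bar\bx$ is an $\ep$-approximate solution in expectation, $\ex[\phi(\bar\bx) - \phi(\bx^+)] \le \ep$ (taking $\min_\bx \phi(\bx) = \phi(\bx^+)$), yields $\ex\|\bar\bx - \bx^+\|^2 \le \tfrac{2\ep}{\la+\mu}$, hence $\ex\|\bar\bx - \bx^+\| \le \sqrt{2\ep/(\la+\mu)}$ by Jensen. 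Now I write $g(\bar\bx) - g(\bx) = [g(\bx^+) - g(\bx)] + [g(\bar\bx) - g(\bx^+)]$, bound the first bracket by the three-point inequality above, and expand the three quadratic terms appearing there as, e.g., $\|\bx - \bx^+\|^2 = \|\bx - \bar\bx\|^2 + 2\langle \bx - \bar\bx, \bar\bx - \bx^+\rangle + \|\bar\bx - \bx^+\|^2$, similarly for $\|\hat\bx - \bx^+\|^2$ and $\|\bx - \bx^+\|^2$ in the $\mu$-term; each resulting cross term is controlled by Cauchy--Schwarz in the form $|\langle \bx - \bar\bx, \bar\bx - \bx^+\rangle| \le \|\bx - \bar\bx\|\cdot\|\bar\bx - \bx^+\|$, and then in expectation one multiplies in $\ex\|\bar\bx - \bx^+\|$. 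For the second bracket $g(\bar\bx) - g(\bx^+)$, I would use that $\phi(\bar\bx) - \phi(\bx^+) \ge 0$ together with convexity of the quadratic part to control $g(\bar\bx) - g(\bx^+)$ by $\phi(\bar\bx) - \phi(\bx^+)$ plus a term linear in $\|\bar\bx - \bx^+\|$ (coming from $\tfrac{\la}{2}(\|\hat\bx - \bx^+\|^2 - \|\hat\bx - \bar\bx\|^2)$), taking expectations and again invoking the $\ep$-bound and the $\sqrt{\ep/(\la+\mu)}$-bound. Collecting all the error contributions, each is of the form $c\sqrt{(\la+\mu)\ep/2}\,\ex\|\bar\bx - \bx\|^2$, $c\sqrt{(\la+\mu)\ep/2}$, or $c\,\ep$ for absolute constants $c$; one needs to verify the constants collapse to exactly $1$ as in the statement, which is a matter of bookkeeping with the factors of $\la$ and $\tfrac12$.

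The main obstacle I anticipate is not conceptual but arithmetic: tracking the several cross terms so that the final coefficients are precisely $1$ (rather than some larger constant), in particular making sure the $\|\bar\bx - \bx\|^2$-error term gets exactly the factor $\sqrt{(\la+\mu)\ep/2}$. This requires writing $\|\bar\bx - \bx\|$ in place of $\|\bx^+ - \bx\|$ wherever the latter appears paired with $\|\bar\bx - \bx^+\|$, and grouping the $\la$- and $\mu$-weighted cross terms together before applying the bound $\ex\|\bar\bx - \bx^+\| \le \sqrt{2\ep/(\la+\mu)}$ so that $\la+\mu$ recombines cleanly. A secondary subtlety is that expectations are over the randomness generating $\bar\bx$ while $\bx$ and $\hat\bx$ are arbitrary (deterministic, or measurable w.r.t.\ the conditioning $\sigma$-field), so Cauchy--Schwarz should be applied pointwise first and then expectation taken, which is why only $\ex\|\bar\bx - \bx^+\|$ (first moment), not a product of expectations, enters. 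Since the statement says this extends \citep[Lemma 8]{boob2024level}, I would mirror that proof's structure and only add the $\mu$-strong-convexity refinement, which contributes the extra $-\tfrac{\mu}{2}\|\bx - \bar\bx\|^2$ term and upgrades $\la$ to $\la+\mu$ inside the error square roots.
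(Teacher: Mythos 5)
Your proposal follows essentially the same route as the paper: the three-point inequality at the exact minimizer $\bx^+$ of the $(\la+\mu)$-strongly convex objective, the bound $\ex\|\bar\bx-\bx^+\|^2\le 2\ep/(\la+\mu)$ from approximate optimality, and a Cauchy--Schwarz/Young swap of $\bx^+$ for $\bar\bx$ in the remaining distance terms, so it is correct in substance. The one bookkeeping point to get right is the term $\frac{\la}{2}\bigl(\|\hat\bx-\bx^+\|^2-\|\hat\bx-\bar\bx\|^2\bigr)$ from your second bracket: do not bound it by a cross term linear in $\|\bar\bx-\bx^+\|$ (which would leave an un-absorbable $\|\hat\bx-\bar\bx\|$-dependent error); instead let $\frac{\la}{2}\|\hat\bx-\bx^+\|^2$ cancel exactly against the $-\frac{\la}{2}\|\hat\bx-\bx^+\|^2$ from the three-point inequality, so that the only conversion needed is $-\frac{\la+\mu}{2}\|\bx-\bx^+\|^2\le-\frac{\la+\mu}{2}\|\bx-\bar\bx\|^2+(\la+\mu)\|\bx-\bar\bx\|\,\|\bar\bx-\bx^+\|$, exactly as the paper does.
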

\begin{proof}
    Let $\bx^\star=\arg\min\limits_{x}\{g(\bx)+\frac{\la}{2}\|\bx-\hat{\bx}\|^2\}$. By $(\mu+\la)$-strong convexity of $g(\cdot)+\frac{\la}{2}\|\cdot-\hat{\bx}\|^2$
        \begin{align}
            &g(\bx)+\frac{\la}{2}\|\bx-\hat{\bx}\|^2\geq g(\bx^\star)+\frac{\la}{2}\|\bx^\star-\hat{\bx}\|^2+\frac{\mu+\la}{2}\|\bx-\bx^\star\|^2,\nonumber\\
            &g(\bx^\star)-g(\bx)\leq \frac{\la}{2}\left[\|\bx-\hat{\bx}\|^2-\|\bx^\star-\hat{\bx}\|^2-\|\bx^\star-\bx\|^2\right]-\frac{\mu}{2}\|\bx-\bx^\star\|^2\label{eq:sc}.
        \end{align}

    By the definition of $\Bar{\bx}$
    \begin{align}
        \ex\{ g(\Bar{\bx})+\frac{\la}{2}\|\Bar{\bx}-\hat{\bx}\|^2\}\leq g({\bx}^\star)+\frac{\la}{2}\|{\bx}^\star-\hat{\bx}\|^2 + \ep \label{eq:defi-approx}
    \end{align}
    Combining (\ref{eq:sc}) and (\ref{eq:defi-approx}) gives
    \begin{align}
        &\ex\left\{g(\Bar{\bx})-g({\bx})\right\}\leq\frac{\la}{2}\left[\|\bx-\hat{\bx}\|^2-\|\bx^\star-{\bx}\|^2-\ex\|\hat{\bx}-\bar{\bx}\|^2\right]-\frac{\mu}{2}\|\bx-\bx^\star\|^2+\ep\label{eq:combine-sc-defi}\\
        &=\ex \left\{\frac{\la}{2}\left[\|\bx-\hat{\bx}\|^2-\|\bx-\bar{\bx}\|^2-\|\Bar{\bx}-\hat{\bx}\|^2\right]-\frac{\mu}{2}\|\bx-\bar{\bx}\|^2\right.\nonumber\\
        &\quad\left.+\frac{\la+\mu}{2}\left[\|\bx-\Bar{\bx}\|^2-\|\bx-\bx^\star\|^2\right]+\ep\right\}\nonumber\\
        &\leq \ex \left\{\frac{\la}{2}\left[\|\bx-\hat{\bx}\|^2-\|\bx-\bar{\bx}\|^2-\|\Bar{\bx}-\hat{\bx}\|^2\right]-\frac{\mu}{2}\|\bx-\bar{\bx}\|^2\right.\nonumber\\
        &\quad\left.+(\la+\mu)\|\bx-\Bar{\bx}\|\|\Bar{\bx}-\bx^\star\|+\ep\right\}\label{eq:combine-sc-defi-result},\\ \nonumber
    \end{align}
    where the last inequality is due to the fact that $\|a\|^2-\|b\|^2\leq -2\langle a, b-a\rangle\leq 2\|a\|\|b-a\|$.

     Let $\bx=\Bar{\bx}$ in (\ref{eq:combine-sc-defi}), and take the expectation with respect to $\Bar{\bx}$. Then we have
     $$\frac{\la+\mu}{2}\ex\,\|{\bx}^\star-\Bar{\bx}\|^2\leq \ep.$$
    By Hölder's inequality we have
    $$
    \begin{aligned}
     &\ex \|\x-\Bar{\x}\|\|\Bar{\x}-\x^\star\|\leq \left(\ex\|\x-\Bar{\x}\|^2\right)^{\half}\left(\ex\|\x^\star-\Bar{\x}\|^2\right)^{\half}\\
     &\leq \half\left(\ex\|\x^\star-\Bar{\x}\|^2\right)^{\half}\left(1+\ex\|\x-\Bar{\x}\|^2\right).\\
     &\leq \half\sqrt{\frac{2\ep}{\lambda+\mu}}\left(1+\ex\|\x-\Bar{\x}\|^2\right).
    \end{aligned}
    $$

      Combining the above results and (\ref{eq:combine-sc-defi-result}) we get the desired result.
\end{proof}

Consider solving the problem from Line \ref{algline:inner_loop_start} to Line \ref{algline:inner_loop_end} in Algorithm~\ref{alg:main_alg} while updating $\bw$,
\begin{equation}\nonumber\label{eq:w_subproblem}
\begin{aligned}\min_{\bw}P_k(\bw):=g(\bw)+\blam\+^\top\bell(\bw)+\frac{1}{2\eta_k}\|\bw-\bw\0\|^2
    \end{aligned}
\end{equation}
The following lemma provides the accuracy between  $\bw_{k+1}$ and $\arg\min_{\bw} P_k(\bw)$.

\begin{lem}\label{lem:w_subproblem}
    Let $P_k(\w):=\blam_{k+1}^{\top}\bell(\w)+g(\w)+\frac{1}{2\tau_k}\|\w-\w\0\|^2$. Set $\alpha<\frac{L}{4}$, $m_k=\Theta(\frac{L\tau_k}{\mu\tau_k+1})$ and $T_k=O(m_k\log\frac{1}{\ep})$ in Algorithm~\ref{alg:main_alg}. The overall sample complexity of obtaining an $\ep$-approximate solution such that $\ex P_k(\bw^{k+1})-\min_{\bw}P_k(\bw)\leq \epsilon$ is $O\left(\left(n+\frac{L\tau_k}{\mu\tau_k+1}\right)\log\frac{1}{\ep}\right)$. Moreover, we can set $\alpha=\frac{1}{12L}$ and $m_k=\frac{96L}{\mu+\tau_k^{-1}}+2$ in practice.
\end{lem}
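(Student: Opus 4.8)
The plan is to prove Lemma~\ref{lem:w_subproblem} by recognizing the inner loop (Lines~\ref{algline:inner_loop_start}--\ref{algline:inner_loop_end}) as a proximal SVRG method applied to the strongly convex finite-sum problem $\min_{\bw} P_k(\bw)$, where the smooth part is $F_k(\bw) := \blam_{k+1}^\top\bell(\bw) = \sum_{i=1}^n \lambda_{i,k+1}\ell_i(\bw)$ and the nonsmooth/proximal-friendly part is $h_k(\bw) := g(\bw) + \frac{1}{2\tau_k}\|\bw-\bw_k\|^2$. The key structural facts are: (i) each summand $n\lambda_{i,k+1}\ell_i$ is convex and has Lipschitz gradient with a constant that is at most $O(L)$, since $\sum_i n\lambda_{i,k+1} = n$ and each $\ell_i$ is $L$-smooth (more precisely, $n\lambda_{i,k+1} \le n\sigma_n$ but after averaging the relevant "average smoothness" constant entering SVRG bounds is $O(L)$); (ii) $h_k$ is $(\mu + \tau_k^{-1})$-strongly convex, so $P_k$ is $(\mu+\tau_k^{-1})$-strongly convex; (iii) the estimator $\boldsymbol{d}_{k,t} = n\lambda_{i_t,k+1}\nabla\ell_{i_t}(\bw_{k,t}) - n\lambda_{i_t,k+1}\nabla\ell_{i_t}(\bar\bw) + \bar{\boldsymbol{g}}$ is an unbiased estimator of $\nabla F_k(\bw_{k,t})$ with the standard SVRG variance control in terms of $\|\bw_{k,t}-\bw^\star_k\|^2 + \|\bar\bw - \bw^\star_k\|^2$, where $\bw^\star_k = \arg\min P_k$.

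First I would set up the one-epoch recursion: fix an epoch of length $m_k$ with reference point $\bar\bw$, and show that the averaged iterate $\bar\bw^+ := \frac{1}{m_k}\sum_{j} \bw_{k,j}$ over that epoch satisfies a contraction of the form $\ex[P_k(\bar\bw^+)] - P_k(\bw^\star_k) \le \rho\, (\ex[P_k(\bar\bw)] - P_k(\bw^\star_k))$ for some $\rho < 1$, provided $\alpha = \Theta(1/L)$ and $m_k = \Theta\!\big(\tfrac{L}{\mu + \tau_k^{-1}}\big) = \Theta\!\big(\tfrac{L\tau_k}{\mu\tau_k + 1}\big)$. This is the standard proximal-SVRG analysis (as in Xiao--Zhang or the convex-composite SVRG literature): one telescopes the per-step descent inequality for proximal SGD with the variance-reduced estimator, uses strong convexity of $P_k$ to relate $\|\bw - \bw^\star_k\|^2$ to the suboptimality gap, and averages over the epoch so that the $\frac{1}{m_k}$ factor combined with the strong convexity modulus gives the geometric factor. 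Then, composing over $T_k/m_k$ epochs, the suboptimality decays like $\rho^{T_k/m_k}$, so taking $T_k = O(m_k \log(1/\ep))$ drives $\ex[P_k(\bw_{k+1})] - \min P_k \le \ep$. The sample complexity per epoch is $O(n + m_k)$ (the $n$ from the full-gradient computation in Line~\ref{algline:primal_epoch_fullgrad}, the $m_k$ from the inner stochastic steps), so the total is $O((n + m_k)\cdot T_k/m_k) = O\!\big((n + m_k)\log(1/\ep)\big) = O\!\big((n + \tfrac{L\tau_k}{\mu\tau_k+1})\log\tfrac{1}{\ep}\big)$, which is exactly the claimed bound; the final sentence about explicit constants $\alpha = \tfrac{1}{12L}$, $m_k = \tfrac{96L}{\mu+\tau_k^{-1}}+2$ just amounts to tracking the numerical constants through the contraction argument to certify $\rho \le $ some explicit value like $1/2$ or $2/3$.

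The main obstacle is handling the weighted finite-sum structure: the effective Lipschitz constants of the summands $n\lambda_{i,k+1}\nabla\ell_i$ are heterogeneous and, a priori, as large as $n\sigma_n L$, which could be much larger than $L$. The resolution is that SVRG's epoch-length and step-size requirements depend on an \emph{average} smoothness quantity $\bar L_k := \frac{1}{n}\sum_i (n\lambda_{i,k+1}) L = L\sum_i\lambda_{i,k+1} = L$ (using $\sum_i\lambda_{i,k+1}=1$), or more carefully on $\max_i$ of the per-sample constants only through a term that, when combined with the uniform sampling probability $1/n$ and the SVRG variance bound $\ex\|\boldsymbol{d}_{k,t} - \nabla F_k(\bw_{k,t})\|^2 \le$ (something) $\cdot \bar L_k \cdot (\text{Bregman gaps})$, yields a clean $O(L)$ dependence. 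I would need to state and use the correct variance lemma for the weighted case — essentially that $\ex_{i_t}\|n\lambda_{i_t,k+1}(\nabla\ell_{i_t}(\bw) - \nabla\ell_{i_t}(\bar\bw))\|^2 \le 2n\sum_i \lambda_{i,k+1}^2 \cdot \|\cdots\|$ bounded via co-coercivity by $O(L)$ times Bregman divergences of $F_k$ — and this is where the bulk of the careful (but routine) calculation sits. A secondary, milder technical point is that $h_k$ changes with $k$ (through the proximal center $\bw_k$), but within the lemma $k$ is fixed, so this causes no difficulty; and the requirement that $\operatorname{prox}_{g+\frac12\|\cdot\|^2}$ is computable, already assumed in the paper, makes each proximal step in Line~\ref{algline:w_update} well-defined.
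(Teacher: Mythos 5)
Your overall route is the same as the paper's: recognize Lines~\ref{algline:inner_loop_start}--\ref{algline:inner_loop_end} as proximal SVRG applied to the composite problem with smooth part $F_k(\bw)=\blam_{k+1}^\top\bell(\bw)$ and $(\mu+\tau_k^{-1})$-strongly convex part $g(\bw)+\frac{1}{2\tau_k}\|\bw-\bw\0\|^2$, then invoke (or re-derive) the standard linear-convergence guarantee and count $n+m_k$ samples per epoch over $O(\log(1/\ep))$ epochs. The paper does exactly this in three lines: it checks that $F_k$ is $L$-smooth, notes the $(\mu+\tau_k^{-1})$-strong convexity, and cites \citep[Theorem 1]{xiao2014proximal}; you instead sketch the epoch-contraction argument explicitly, which is fine.

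The one place your sketch does not close is precisely the ``main obstacle'' you flag yourself: the variance control for the weighted estimator. The standard computation gives
$$\ex_{i_t}\bigl\|n\lam_{i_t,k+1}\bigl(\nabla\ell_{i_t}(\bw)-\nabla\ell_{i_t}(\bar{\bw})\bigr)\bigr\|^2=n\sum_{i=1}^n\lam_{i,k+1}^2\,\|\nabla\ell_i(\bw)-\nabla\ell_i(\bar{\bw})\|^2\leq 2Ln\Bigl(\max_i\lam_{i,k+1}\Bigr)D_{F_k}(\bw,\bar{\bw}),$$
where $D_{F_k}$ denotes the Bregman divergence of $F_k$. The factor that survives is $n\max_i\lam_{i,k+1}\leq n\sigma_n$, not the first moment $\sum_i\lam_{i,k+1}=1$: your proposed ``average smoothness'' $\bar L_k=\frac{1}{n}\sum_i (n\lam_{i,k+1})L=L$ conflates the first moment of the weights with the second moment that actually enters the variance bound. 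As written, the step-size and epoch-length requirements therefore come out as $\alpha=\Theta(1/(n\sigma_n L))$ and $m_k=\Theta\bigl(n\sigma_n L/(\mu+\tau_k^{-1})\bigr)$ rather than the claimed clean $L$-dependence; for the spectral risks in Table~\ref{tab:sigma} the quantity $n\sigma_n$ is an absolute constant (e.g.\ $1/\alpha$ for $\alpha$-CVaR), so the rate survives up to constants, but this has to be stated and carried through. To be fair, the paper's own proof elides the same point: it verifies only that the aggregate $F_k$ is $L$-smooth, whereas the constants in the cited theorem depend on the smoothness of the individual sampled components $n\lam_{i,k+1}\ell_i$, which is $n\lam_{i,k+1}L$. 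The clean fix is either to carry the factor $n\max_i\lam_{i,k+1}$ explicitly or to sample $i_t$ with probability proportional to $\lam_{i,k+1}$, which restores an honest $L$.
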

\begin{proof}
    First note that $\blam^{k+1\top}\bell(\bw)$ is $L$-smooth since
    $$
    \|\sum_{i=i}^n\lam_{k+1,i}\ell_i(\x)-\sum_{i=1}^n\lam_{k+1,i}\ell_i(\y)\|\leq \sum_{i=1}^n\lam_{k+1,i}\|\ell_i(\x)-\ell_i(\y)\|\leq L\|\x-\y\|,
   $$
   for $\forall \x,\y\in\mathbb{R}^d$. In the last inequality we use $\sum_i\lam_{k+1,i}\leq 1$ due to $\blam\+\in\Pi_{\bsig}$ and $L$-smoothness of $\ell_i$. Moreover, it is not hard to see that $P_k(\bw)$ is $\mu+\tau_k^{-1}$-strongly convex. By \citep[Theorem 1]{xiao2014proximal} we get the desired result. 
\end{proof}

Next, for convenience, we consider that $T_k$ (will be determined in Theorem~\ref{thm:main-thm}) is large enough so that $\w_k$ is a $\delta_k$-near optimal solution of $P_k(\w)$, that is, $\ex_k P_k(\w\+)-\min_{\w}P_k(\w)\leq\ep$. Here, $\ex_k$ represents the conditional expectation with respect to the random samples used to compute $\w\+$ given $\w\0,\dots,\w_0$. Then, we can provide a one-step analysis of the outer loop of our algorithm. We use $L(\bw,\blam)=\blam^\top\bell(\bw)+g(\bw)$ in the analysis, which is natural because our algorithm iteratively  updates $\bw$ and $\blam$.

\begin{lem}\label{lem:one-step}
    Suppose Assumption~\ref{assumption:basic} holds. Let $\{\bw\0\}$ and $\{\blam\0\}$ be the sequences generated by Algorithm~\ref{alg:main_alg}. Then for any $\bw\in\mathbb{R}^d$ and $\blam\in \Pi_{\bsig}$, the following inequality holds,
\begin{equation*}
    \begin{aligned}
        &\ex_k \, \left\{L(\bw\+,\blam)-L(\bw,\blam\+)\right\}\\
        \leq &\ex_k\left\{\langle\blam-\blam\+,\bell(\bw\+)\rangle+\frac{1}{2\eta_k}\left[\|\blam-\blam\0\|^2-\|\blam-\blam\+\|^2-\|\blam\+-\blam\0\|^2\right]\right.\\
        &+\frac{1}{2\tau\0}\left[\|\bw-\bw\0\|^2-\|\bw-\bw\+\|^2-\|\bw\+-\bw\0\|^2\right]-\frac{\mu}{2}\|\bw-\bw\+\|^2\\
        &+ \left.\langle \bv\0,\blam\+ -\blam \rangle +\delta\0+\sqrt{\frac{(\tau_k^{-1}+\mu)\delta_k}{2}}(\|\bw-\bw\+\|^2+1)\right\},
    \end{aligned}
\end{equation*}
where $\ex_k$ represents the conditional expectation with respect to the random samples used to compute $\bw\+$ given $\bw\0,\dots,\bw_0$.
\end{lem}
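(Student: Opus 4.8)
The plan is to combine the optimality conditions of the two proximal subproblems (the $\blam$-update in Line~\ref{algline:update_lam} and the approximately-solved $\bw$-update in Lines~\ref{algline:inner_loop_start}--\ref{algline:inner_loop_end}) and then glue them together through the bilinear coupling term $\blam^\top\bell(\bw)$. For the dual step, $\blam\+$ exactly minimizes a $\tfrac{1}{\eta_k}$-strongly convex function over $\Pi_{\bsig}$, so the standard three-point (prox) inequality gives, for every $\blam\in\Pi_{\bsig}$,
\[
\langle \bv\0,\blam\+-\blam\rangle \;\le\; \frac{1}{2\eta_k}\Big[\|\blam-\blam\0\|^2-\|\blam-\blam\+\|^2-\|\blam\+-\blam\0\|^2\Big].
\]
For the primal step, $\bw\+$ is only a $\delta_k$-approximate minimizer of $P_k(\bw)=\blam_{k+1}^\top\bell(\bw)+g(\bw)+\tfrac{1}{2\tau_k}\|\bw-\bw\0\|^2$. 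Here is where I would invoke Lemma~\ref{lem:auxi}: the function $\blam_{k+1}^\top\bell(\cdot)+g(\cdot)$ is $\mu$-strongly convex (from Assumption~\ref{assumption:basic}, since $g$ is $\mu$-strongly convex and each $\ell_i$ is convex with $\sum_i\lambda_{i,k+1}\le 1$), and $P_k$ adds the quadratic $\tfrac{1}{2\tau_k}\|\cdot-\bw\0\|^2$ with parameter $\lambda=\tau_k^{-1}$. Applying Lemma~\ref{lem:auxi} with $\hat\bx=\bw\0$, $\bar\bx=\bw\+$, $\ep=\delta_k$ yields
\[
\ex_k\big\{(\blam\+^\top\bell+g)(\bw\+)-(\blam\+^\top\bell+g)(\bw)\big\}\le \ex_k\Big\{\tfrac{1}{2\tau_k}\big[\|\bw-\bw\0\|^2-\|\bw-\bw\+\|^2-\|\bw\+-\bw\0\|^2\big]-\tfrac{\mu}{2}\|\bw-\bw\+\|^2\Big\}+\sqrt{\tfrac{(\tau_k^{-1}+\mu)\delta_k}{2}}\big(\ex_k\|\bw-\bw\+\|^2+1\big)+\delta_k,
\]
which accounts for the $\delta\0$ and $\sqrt{(\tau_k^{-1}+\mu)\delta_k/2}(\|\bw-\bw\+\|^2+1)$ terms in the statement.

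**Next I would assemble the two pieces.** Write $L(\bw\+,\blam)-L(\bw,\blam\+) = \big(\blam^\top\bell(\bw\+)+g(\bw\+)\big) - \big(\blam\+^\top\bell(\bw)+g(\bw)\big)$. Add and subtract $\blam\+^\top\bell(\bw\+)$ to split this as
\[
\big[(\blam\+^\top\bell+g)(\bw\+)-(\blam\+^\top\bell+g)(\bw)\big] \;+\; \langle \blam-\blam\+,\bell(\bw\+)\rangle.
\]
The first bracket is bounded by the primal inequality above; the second bracket is exactly the $\langle\blam-\blam\+,\bell(\bw\+)\rangle$ term displayed in the lemma. That leaves the dual inequality to supply the $\tfrac{1}{2\eta_k}$-bracket and the $\langle\bv\0,\blam\+-\blam\rangle$ term — note $\langle\bv\0,\blam\+-\blam\rangle$ appears with a sign that lets it cancel against what the dual prox inequality produces, so I would keep $\bv\0$ symbolic rather than expand it (its expansion as the momentum combination is handled later when telescoping). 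Summing the dual bound, the primal bound, and the identity, then taking $\ex_k$, reproduces \eqref{eq:one-step} line by line.

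**The main obstacle** is the inexactness bookkeeping: Lemma~\ref{lem:auxi} is stated for minimizing $g(\bx)+\tfrac{\lambda}{2}\|\bx-\hat\bx\|^2$ with $g$ strongly convex, but here the "$g$" role is played by the composite $\blam\+^\top\bell(\cdot)+g(\cdot)$, so I must verify this composite inherits $\mu$-strong convexity and that "$\delta_k$-approximate solution in expectation" matches the hypothesis $\ex_k P_k(\bw\+)-\min P_k\le\delta_k$ — this is where the coupling with Lemma~\ref{lem:w_subproblem} (which certifies $T_k=O(m_k\log\frac1{\delta_k})$ suffices) enters. A secondary subtlety is that the dual prox inequality is deterministic given $\bw\0,\bw\m$ (so $\bv\0$ and $\blam\+$ are deterministic conditionally), while the primal inequality genuinely needs $\ex_k$; I would be careful to state that the dual terms are unaffected by $\ex_k$ and only the $\bw\+$-dependent terms carry the conditional expectation, so that writing $\ex_k\{\cdots\}$ around the whole right-hand side is valid. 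Once these measure-theoretic/convexity checks are in place, the rest is the routine algebraic merge described above.
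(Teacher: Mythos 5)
Your decomposition and both applications of Lemma~\ref{lem:auxi} are exactly the paper's proof: split $L(\bw_{k+1},\blam)-L(\bw,\blam_{k+1})$ by adding and subtracting $\blam_{k+1}^{\top}\bell(\bw_{k+1})$, bound the primal bracket via Lemma~\ref{lem:auxi} applied to the $\mu$-strongly convex composite $\blam_{k+1}^{\top}\bell(\cdot)+g(\cdot)$ with $\lambda=\tau_k^{-1}$, $\hat{\bx}=\bw_k$ and $\ep=\delta_k$ (this is where every $\delta_k$ term in the statement originates), and handle the $\blam$-step with the exact three-point inequality. Your remark that only the $\bw_{k+1}$-dependent terms genuinely carry $\ex_k$ is also how the paper proceeds.

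The one concrete problem is the sign of your dual three-point inequality. The $\blam$-update minimizes $h(\blam')=-\bv_k^{\top}\blam'$ plus the prox term, so the three-point inequality reads $h(\blam_{k+1})-h(\blam)=\langle\bv_k,\blam-\blam_{k+1}\rangle\le\frac{1}{2\eta_k}\left[\|\blam-\blam_k\|^2-\|\blam-\blam_{k+1}\|^2-\|\blam_{k+1}-\blam_k\|^2\right]$, equivalently $0\le\frac{1}{2\eta_k}[\cdots]+\langle\bv_k,\blam_{k+1}-\blam\rangle$, which is the paper's inequality (\ref{eq:y-update}). Your displayed version $\langle\bv_k,\blam_{k+1}-\blam\rangle\le\frac{1}{2\eta_k}[\cdots]$ is the negation and is actually false: taking $\blam=\blam_k$ it would force $\langle\bv_k,\blam_{k+1}-\blam_k\rangle\le-\frac{1}{\eta_k}\|\blam_{k+1}-\blam_k\|^2\le 0$, whereas optimality of $\blam_{k+1}$ gives $\langle\bv_k,\blam_{k+1}-\blam_k\rangle\ge\frac{1}{\eta_k}\|\blam_{k+1}-\blam_k\|^2\ge 0$. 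If you literally sum your version with the primal bound and the identity, the right-hand side acquires $-\langle\bv_k,\blam_{k+1}-\blam\rangle$ rather than the $+\langle\bv_k,\blam_{k+1}-\blam\rangle$ the lemma asserts (and which the subsequent telescoping in Lemma~\ref{lem:recursive-descent} relies on). With the direction corrected, the dual inequality is simply a statement that the two dual terms on the right-hand side sum to something nonnegative, and your assembly then reproduces the lemma verbatim.
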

\begin{proof}
    From the update of $\blam_{k+1}$ and Lemma~\ref{lem:auxi} we have
    \begin{equation}\label{eq:y-update}
        0\leq\frac{1}{2\eta\0}\left[\|\blam-\blam\0\|^2-\|\blam-\blam\+\|^2-\|\blam\+-\blam\0\|^2\right] + \langle \bv\0,\blam\+-\blam\rangle.
    \end{equation}

From the update of $\bw\+$ and Lemma~\ref{lem:auxi} we have
\begin{align}\label{eq:x-update}
    &\ex_k \left\{g(\bw\+)+\langle\blam\+,\bell(\bw\+)\rangle-g(\bw)-\langle\blam\+,\bell(\bw)\rangle\right\}\nonumber\\
    &\leq \ex_k \left\{\frac{1}{2\tau\0}\left[\|\bw-\bw\0\|^2-\|\bw-\bw\+\|^2-\|\bw\+-\bw\0\|^2\right]-\frac{\mu}{2}\|\bw-\bw\+\|^2\nonumber\right.\\
    &\left.\quad+\de\0+\sqrt{\frac{(\tau_k^{-1}+\mu)\delta_k}{2}}(\|\bw-\bw\+\|^2+1)\right\}.
\end{align}

Taking the conditional expectation $\ex_k$ of both sides of (\ref{eq:y-update}) and summing with (\ref{eq:x-update}) we obtain
\begin{equation}
    \begin{aligned}
        &\ex_k \,\left\{ L(\bw\+,\blam)-L(\bw,\blam\+)\right\}\\
        =&\ex_k\, \left\{g(\bw\+)+\langle\blam,\bell(\bw\+)\rangle-g(\bw)-\langle\blam\+,\bell(\bw)\rangle\right\}\\
        \leq &\ex_k\,\left\{\langle\blam-\blam\+,\bell(\bw\+)\rangle+\frac{1}{2\eta_k}\left[\|\blam-\blam\0\|^2-\|\blam-\blam\+\|^2-\|\blam\+-\blam\0\|^2\right]\right.\\
        &  +\frac{1}{2\tau\0}\left[\|\bw-\bw\0\|^2-\|\bw-\bw\+\|^2-\|\bw\+-\bw\0\|^2\right]+\langle \bv\0,\blam\+ -\blam \rangle\\
        &\left. -\frac{\mu}{2}\|\bw-\bw\+\|^2+\delta\0+\sqrt{\frac{(\tau_k^{-1}+\mu)\delta_k}{2}}(\|\bw-\bw\+\|^2+1)\right\}.
    \end{aligned}
\end{equation}
\end{proof}

\begin{lem}\label{lem:recursive-descent}
Under the same assumptions as Lemma~\ref{lem:one-step}, for any $\bw\in\mathbb{R}^d$ and $\blam\in \Pi_{\bsig}$, we have
    \begin{equation}\label{eq:lem-descent}
    \begin{aligned}
        &\ex_k \left\{L(\bw\+,\blam)-L(\bw,\blam\+)\right\}\\
        \leq &\ex_k\left\{  \frac{1}{2\eta_k}\left[\|\blam-\blam_k\|^2-\|\blam-\blam\+\|^2\right]
         +\frac{1}{2\tau_k}\|\bw-\bw\0\|^2-\frac{1}{2}\left(\frac{1}{\tau_k}+\mu\right)\|\bw-\bw\+\|^2\right.  \\
         & + \langle \bell(\bw\+)-\bell(\bw\0),\blam-\blam\+\rangle
        - \theta_k \langle \bell(\bw\0)-          \bell(\bw\m),\blam-\blam\0\rangle\\
        & - \frac{1}{2}\left[\frac{1}{\eta_k}-\theta_k\frac{G}{\alpha_k}\right]\|\blam\0-\blam\+\|^2-\frac{1}{2\tau_k}\|\bw\0-\bw\+\|^2+\frac{G\theta_k\alpha_k}{2}\|\bw\0-\bw\m\|^2\\
        &  \left.
        + \delta_k+\sqrt{\frac{(\tau_k^{-1}+\mu)\delta_k}{2}}(\|\bw-\bw\+\|^2+1)\right\}.
    \end{aligned}
\end{equation}
\end{lem}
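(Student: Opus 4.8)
The plan is to start from the one-step bound of Lemma~\ref{lem:one-step} and do nothing more than rearrange its right-hand side, the only nontrivial maneuver being how to dispose of the momentum-induced cross term. First I would combine the two inner products $\langle\blam-\blam\+,\bell(\bw\+)\rangle+\langle\bv\0,\blam\+-\blam\rangle=\langle\blam-\blam\+,\bell(\bw\+)-\bv\0\rangle$ and substitute the definition $\bv\0=(1+\theta_k)\bell(\bw\0)-\theta_k\bell(\bw\m)$ from Line~\ref{algline:moment}, which gives $\bell(\bw\+)-\bv\0=\bigl(\bell(\bw\+)-\bell(\bw\0)\bigr)-\theta_k\bigl(\bell(\bw\0)-\bell(\bw\m)\bigr)$, hence
\[
\langle\blam-\blam\+,\bell(\bw\+)-\bv\0\rangle=\langle\bell(\bw\+)-\bell(\bw\0),\blam-\blam\+\rangle-\theta_k\langle\bell(\bw\0)-\bell(\bw\m),\blam-\blam\+\rangle.
\]
The first summand already matches a term in \eqref{eq:lem-descent}.

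For the momentum summand I would split $\blam-\blam\+=(\blam-\blam\0)+(\blam\0-\blam\+)$, producing the target's $-\theta_k\langle\bell(\bw\0)-\bell(\bw\m),\blam-\blam\0\rangle$ together with a cross term $-\theta_k\langle\bell(\bw\0)-\bell(\bw\m),\blam\0-\blam\+\rangle$. This cross term is the piece that needs to be controlled: Cauchy--Schwarz together with the $G$-Lipschitz property of the loss map $\bell$ (Assumption~\ref{assumption:basic}) bounds it by $\theta_k G\,\|\blam\0-\blam\+\|\,\|\bw\0-\bw\m\|$, and then Young's inequality with an auxiliary parameter $\alpha_k>0$ gives
\[
-\theta_k\langle\bell(\bw\0)-\bell(\bw\m),\blam\0-\blam\+\rangle\le\frac{\theta_k G}{2\alpha_k}\|\blam\0-\blam\+\|^2+\frac{\theta_k G\alpha_k}{2}\|\bw\0-\bw\m\|^2.
\]
The first piece is combined with the $-\tfrac{1}{2\eta_k}\|\blam\+-\blam\0\|^2$ term already present in Lemma~\ref{lem:one-step} to yield $-\tfrac12\bigl(\tfrac{1}{\eta_k}-\theta_k\tfrac{G}{\alpha_k}\bigr)\|\blam\0-\blam\+\|^2$; the second piece is exactly the remaining new term $\tfrac{G\theta_k\alpha_k}{2}\|\bw\0-\bw\m\|^2$.

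It then remains to collect the quadratic terms already present in Lemma~\ref{lem:one-step}: keep $\tfrac{1}{2\tau\0}\|\bw-\bw\0\|^2$, merge $-\tfrac{1}{2\tau\0}\|\bw-\bw\+\|^2$ with $-\tfrac{\mu}{2}\|\bw-\bw\+\|^2$ into $-\tfrac12\bigl(\tfrac{1}{\tau_k}+\mu\bigr)\|\bw-\bw\+\|^2$, carry over $-\tfrac{1}{2\tau\0}\|\bw\+-\bw\0\|^2$, and keep the $\tfrac{1}{2\eta_k}\bigl[\|\blam-\blam\0\|^2-\|\blam-\blam\+\|^2\bigr]$ and $\delta_k$ terms verbatim. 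Since every quantity manipulated above other than $\bw\+$ is deterministic given $\bw\0,\dots,\bw_0$, the conditional expectation $\ex_k$ passes through all these steps, and the result is \eqref{eq:lem-descent}. The proof is essentially bookkeeping; the only step with real content is the Young split of the momentum cross term, because the auxiliary parameter $\alpha_k$ it introduces (and the sizes of the resulting coefficients of $\|\blam\0-\blam\+\|^2$ and $\|\bw\0-\bw\m\|^2$) are precisely what must later be balanced against the telescoping conditions on $\{\eta_k,\theta_k,\tau_k\}$ used in Theorem~\ref{thm:main-thm}.
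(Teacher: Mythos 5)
Your proposal is correct and follows essentially the same route as the paper: expand $\bv_k$ inside the inner products, split off the cross term $-\theta_k\langle \bell(\bw_k)-\bell(\bw_{k-1}),\blam_k-\blam_{k+1}\rangle$, bound it via the Lipschitz property of $\bell$ and Young's inequality with the auxiliary parameter $\alpha_k$, and then merge the resulting pieces with the quadratic terms from Lemma~\ref{lem:one-step}. The only cosmetic difference is that the paper peels off $-\langle\bell(\bw_{k+1}),\blam-\blam_{k+1}\rangle$ explicitly before cancelling it, whereas you cancel the two inner products in one step; the content is identical.
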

\begin{proof}
    First, we have \begin{equation}\nonumber
        \begin{aligned}
            &\langle \bv_k,\blam\+-\blam\rangle\\ =&\langle\bell(\bw\0)+\theta_k\left(\bell(\bw\0)-\bell(\bw\m)\right),\blam\+-\blam\rangle\\
            =&-\langle\bell(\bw\+),\blam-\blam\+\rangle+\langle \bell(\bw\+)-\bell(\bw\0),\blam-\blam\+\rangle\\
            \quad& -\theta_k \langle \bell(\bw\0)-\bell(\bw\m),\blam-\blam\0\rangle-\theta_k \langle \bell(\bw\0)-\bell(\bw\m),\blam\0-\blam\+\rangle.\\
        \end{aligned}
    \end{equation}
    Then we obtain that
    \begin{equation}\label{eq:pri-dual-ineq}
        \begin{aligned}
        &\langle\blam-\blam\+,\bell(\bw\+)\rangle+\langle \bv_k,\blam\+-\blam\rangle\\
        &\leq
        \langle \bell(\bw\+)-\bell(\bw\0),\blam-\blam\+\rangle-\theta_k \langle \bell(\bw\0)-\bell(\bw\m),\blam-\blam\0\rangle\\
            &\quad-\theta_k \langle \bell(\bw\0)-\bell(\bw\m),\blam\0-\blam\+\rangle.
    \end{aligned}
    \end{equation}

    Next we bound the last term on the right-hand side of (\ref{eq:pri-dual-ineq}).
    \begin{equation}\label{eq:smooth-phi}
        \begin{aligned}
            &\langle \bell(\bw\0)-\bell(\bw\m),\blam\0-\blam\+\rangle\\
            \leq& G\|\bw\0-\bw\m\|\|\blam\0-\blam\+\|\\
            \leq& \frac{G\alpha_k}{2}\|\bw\0-\bw\m\|^2+\frac{G}{2\alpha_k}\|\blam\0-\blam\+\|^2,
        \end{aligned}
    \end{equation}
    where the first inequality is due to the Lipschitz conditions of $\ell_i$ and in the second inequality we use Young's inequality with $\alpha_k> 0$.

    Combing (\ref{eq:pri-dual-ineq}) and (\ref{eq:smooth-phi}) we get
    \begin{equation}\label{eq:smooth-phi-result}
        \begin{aligned}
            &\langle\blam-\blam\+,\bell(\bw\+)\rangle+\langle \bv_k,\blam\+-\blam\rangle\\
        &\leq \langle \bell(\bw\+)-\bell(\bw\0),\blam-\blam\+\rangle-\theta_k \langle \bell(\bw\0)-\bell(\bw\m),\blam-\blam\0\rangle\\
            &\quad +\frac{G\alpha_k\theta_k}{2}\|\bw\0-\bw\m\|^2+\frac{G\theta_k}{2\alpha_k}\|\blam\0-\blam\+\|^2.
        \end{aligned}
    \end{equation}
    Taking the conditional expectation $\ex_k$ of both sides of (\ref{eq:smooth-phi-result}) and combing it with Lemma~\ref{lem:one-step}
    we get the desired result.
\end{proof}

We remark that $\alpha_k$ does not need to be computed in the actual algorithm but only exists in the theoretical analysis. Next, we try to telescope the terms on the right hand side of (\ref{eq:lem-descent}) by multiplying each term by $\gamma_k$. To ensure that the adjacent terms in the sequence $k=0,\dots,K-1$ can be canceled out during summation, we need the parameters of the algorithm to satisfy the following conditions.
\begin{con}\label{condition:alg_para}
For $k=0,1,...$, the following conditions for parameters in the analysis and Algorithm~\ref{alg:main_alg}:
    \begin{subequations}
        \begin{align}
            &\frac{\gamma_{k+1}}{\eta_{k+1}}\leq \frac{\gamma_k}{\eta_k},\label{assumption:step-size_1}\\
            &\frac{\gamma_{k+1}}{\tau_{k+1}}\leq \gamma_k\left(\frac{1}{\tau_k}+\mu-\sqrt{2(\mu+\tau_k^{-1})\delta_k}\right),\label{assumption:step-size_2}\\
            & \gamma_k=\gamma_{k+1}\theta_{k+1},\label{assumption:step-size_3}\\
            & G\alpha_{k+1}\leq \frac{1}{\tau_k},\label{assumption:step-size_6}\\
            & \theta_k\frac{G}{\alpha_k}\leq\frac{1}{\eta_k}.\label{assumption:step-size_7}
        \end{align}
    \end{subequations}
\end{con}

\begin{lem}\label{lem:pri-convergence}
    Assume Assumption~\ref{assumption:basic} holds and Condition~\ref{condition:alg_para} is satisfied. Then for all $\bw\in\mathbb{R}^d$ and $\blam\in \Pi_{\bsig}$ we have
    $$
    \frac{\gamma_K}{2\tau_K}\ex\|\bw^\star-\bw_K\|^2\leq  \frac{\gamma_0}{2\eta_0}\|\blam^\star-\blam_0\|^2 + \frac{\gamma_0}{2\tau_0}\|\bw^\star-\bw_0\|^2
        +  \sum_{k=0}^{K-1}\left(\delta_k \gamma_k+\frac{\gamma_k}{2}\sqrt{2(\mu+\tau_k^{-1})\delta_k}\right),$$
    where $\bw^\star=\arg\min_{\bw}R_{\bsig}(\bw)+g(\bw)$ and $\blam^\star=\bsig_{\pi^{-1}}$. $\pi$ is the permutation that arranges $\ell_1(\bw^\star),\dots,\ell_n(\bw^\star)$ in ascending order, that is, $\ell_{\pi(1)}(\bw^\star)\leq\dots\leq\ell_{\pi(n)}(\bw^\star)$.
\end{lem}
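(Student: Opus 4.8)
The plan is to evaluate the one-step estimate of Lemma~\ref{lem:recursive-descent} at the point $(\bw,\blam)=(\bw^\star,\blam^\star)$, discard its left-hand side by a saddle-point argument, multiply the remaining inequality by $\gamma_k>0$, take full expectation, and sum over $k=0,\dots,K-1$. Condition~\ref{condition:alg_para} is chosen precisely so that all the intermediate terms cancel in the sum, leaving only the two initial terms, the quantity $\frac{\gamma_K}{2\tau_K}\ex\|\bw^\star-\bw_K\|^2$ on the left, and the error terms $\sum_k\big(\delta_k\gamma_k+\frac{\gamma_k}{2}\sqrt{2(\mu+\tau_k^{-1})\delta_k}\big)$.

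First I would verify that $(\bw^\star,\blam^\star)$ is a saddle point of $L$ over $\mathbb{R}^d\times\Pi_{\bsig}$ in the minimax problem~\eqref{eq:problem_reform}: by construction $\blam^\star=\bsig_{\pi^{-1}}$ maximizes the linear functional $\blam\mapsto\langle\blam,\bell(\bw^\star)\rangle$ over $\Pi_{\bsig}$ (the rearrangement inequality puts the smallest weights on the smallest losses), so $L(\bw^\star,\blam)\le L(\bw^\star,\blam^\star)$ for all $\blam\in\Pi_{\bsig}$; and the first-order optimality of $\bw^\star$ combined with the subgradient description of $\partial R_{\bsig}$ recalled in Section~\ref{sec:alg-prob} shows that $0\in\partial_{\bw}L(\bw^\star,\blam^\star)$ for this $\blam^\star$, whence by convexity $L(\bw^\star,\blam^\star)\le L(\bw,\blam^\star)$ for all $\bw$. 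Therefore $L(\bw\+,\blam^\star)-L(\bw^\star,\blam\+)\ge0$, so substituting $\bw=\bw^\star,\blam=\blam^\star$ into \eqref{eq:lem-descent} gives $0\le\ex_k\{\text{right-hand side of \eqref{eq:lem-descent}}\}$. Multiplying by $\gamma_k$, taking total expectation (by the tower property), and summing over $k$, it remains to bound $\ex\sum_{k=0}^{K-1}\gamma_k\{\cdots\}$ by the claimed quantity, the telescoping below being a deterministic identity in the (random) iterates and hence commuting with $\ex$.

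The summation is then organized group by group. (i) The dual proximal terms $\sum_k\frac{\gamma_k}{2\eta_k}\big(\|\blam^\star-\blam\0\|^2-\|\blam^\star-\blam\+\|^2\big)$ collapse to $\frac{\gamma_0}{2\eta_0}\|\blam^\star-\blam_0\|^2-\frac{\gamma_{K-1}}{2\eta_{K-1}}\|\blam^\star-\blam_K\|^2$ because \eqref{assumption:step-size_1} makes $\gamma_k/\eta_k$ non-increasing; the last, negative, term is retained. (ii) In the primal proximal terms, the factor $\sqrt{(\tau_k^{-1}+\mu)\delta_k/2}\,(\|\bw^\star-\bw\+\|^2+1)$ splits into a constant part, pushed (with $\gamma_k\delta_k$) into the error sum, and a part that shifts the coefficient of $\|\bw^\star-\bw\+\|^2$; \eqref{assumption:step-size_2} is arranged so that the shifted coefficient $-\frac{\gamma_k}{2}\big(\mu+\tau_k^{-1}-\sqrt{2(\mu+\tau_k^{-1})\delta_k}\big)$ is $\le-\frac{\gamma_{k+1}}{2\tau_{k+1}}$, yielding a clean telescope to $\frac{\gamma_0}{2\tau_0}\|\bw^\star-\bw_0\|^2-\frac{\gamma_K}{2\tau_K}\|\bw^\star-\bw_K\|^2$. (iii) The coupling terms $\gamma_k\langle\bell(\bw\+)-\bell(\bw\0),\blam^\star-\blam\+\rangle-\gamma_k\theta_k\langle\bell(\bw\0)-\bell(\bw\m),\blam^\star-\blam\0\rangle$ telescope through $\gamma_{k-1}=\gamma_k\theta_k$ from \eqref{assumption:step-size_3}, and since $\bw_{-1}=\bw_0$ only the boundary term $b_K:=\gamma_{K-1}\langle\bell(\bw_K)-\bell(\bw_{K-1}),\blam^\star-\blam_K\rangle$ survives. (iv) The residual squared-increment terms are disposed of by \eqref{assumption:step-size_7} (which makes each $-\frac{\gamma_k}{2}(\eta_k^{-1}-\theta_kG/\alpha_k)\|\blam\0-\blam\+\|^2$ nonpositive) and, after reindexing via \eqref{assumption:step-size_3}, by \eqref{assumption:step-size_6} (which makes each $\frac{\gamma_{k-1}}{2}(G\alpha_k-\tau_{k-1}^{-1})\|\bw\0-\bw\m\|^2$ nonpositive for $k\le K-1$), leaving only $-\frac{\gamma_{K-1}}{2\tau_{K-1}}\|\bw_K-\bw_{K-1}\|^2$.

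The last and most delicate step is absorbing the leftover coupling term $b_K$. By the Lipschitz bound on $\bell$ and Young's inequality, $b_K\le\frac{\gamma_{K-1}}{2\tau_{K-1}}\|\bw_K-\bw_{K-1}\|^2+\frac{\gamma_{K-1}G^2\tau_{K-1}}{2}\|\blam^\star-\blam_K\|^2$; the first summand cancels the leftover $-\frac{\gamma_{K-1}}{2\tau_{K-1}}\|\bw_K-\bw_{K-1}\|^2$ from (ii)/(iv), and the second is dominated by the leftover $-\frac{\gamma_{K-1}}{2\eta_{K-1}}\|\blam^\star-\blam_K\|^2$ from (i) as soon as $G^2\eta_{K-1}\tau_{K-1}\le1$. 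I would then observe that this last inequality is itself a consequence of Condition~\ref{condition:alg_para}: \eqref{assumption:step-size_7} and \eqref{assumption:step-size_6} give $\theta_kG\eta_k\le\alpha_k\le1/(G\tau_{k-1})$, while \eqref{assumption:step-size_1} and \eqref{assumption:step-size_3} give $\theta_{k+1}\eta_{k+1}\ge\eta_k$, and chaining these yields $G^2\eta_k\tau_k\le1$ for every $k$. After these cancellations the summed inequality reads $0\le\ex\{-\frac{\gamma_K}{2\tau_K}\|\bw^\star-\bw_K\|^2+\frac{\gamma_0}{2\eta_0}\|\blam^\star-\blam_0\|^2+\frac{\gamma_0}{2\tau_0}\|\bw^\star-\bw_0\|^2+\sum_k(\gamma_k\delta_k+\frac{\gamma_k}{2}\sqrt{2(\mu+\tau_k^{-1})\delta_k})\}$, and rearranging gives the claim. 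The main obstacle is thus purely bookkeeping: tracking which boundary term each of the several simultaneous telescopings leaves behind, and recognizing that the single uncancelled positive quantity $b_K$ is exactly offset by those boundary terms, with the auxiliary estimate $G^2\eta\tau\le1$ needing to be \emph{derived from}, rather than assumed in addition to, the parameter conditions.
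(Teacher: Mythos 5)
Your proposal is correct and follows essentially the same route as the paper's proof: take the one-step bound of Lemma~\ref{lem:recursive-descent}, multiply by $\gamma_k$, telescope under Condition~\ref{condition:alg_para}, absorb the surviving boundary coupling term $\gamma_{K-1}\langle\bell(\bw_K)-\bell(\bw_{K-1}),\blam-\blam_K\rangle$ by Young's inequality into the leftover negative $\blam$- and $\bw$-increment terms, and finish with the saddle-point inequality at $(\bw^\star,\blam^\star)$. The only cosmetic difference is that you fix the Young parameter to $1/(G\tau_{K-1})$ and derive $G^2\eta_{K-1}\tau_{K-1}\le 1$ by chaining \eqref{assumption:step-size_1}, \eqref{assumption:step-size_3}, \eqref{assumption:step-size_6}, \eqref{assumption:step-size_7}, whereas the paper keeps the free parameter $\alpha_K$ and verifies the equivalent inequalities $\tilde A\ge 0$, $\tilde C\le 0$ directly.
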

\begin{proof}
    Taking  expectations with respect to $\bw\0,\dots,\bw_1$
    in (\ref{eq:lem-descent}) and using the law
    of total expectation yields
\begin{equation}\label{eq:one-step-final}
    \begin{aligned}
        &\ex \left\{L(\bw\+,\blam)-L(\bw,\blam\+)\right\}\\
        \leq \, &\ex\, \left\{\frac{1}{2\eta_k}\left[\|\blam-\blam_k\|^2-\|\blam-\blam\+\|^2\right] -\frac{1}{2}\left[\frac{1}{\eta_k}-\frac{G\theta_k}{\alpha_k}\right]\|\blam\0-\blam\+\|^2\right.\\
        & +\langle \bell(\bw\+)-\bell(\bw\0),\blam-\blam\+\rangle -\theta_k \langle \bell(\bw\0)-          \bell(\bw\m),\blam-\blam\0\rangle\\
        & +\frac{1}{2\tau_k}\|\bw-\bw\0\|^2-\frac{1}{2}\left(\frac{1}{\tau_k}+\mu-\sqrt{2(\mu+\tau_k^{-1})\delta_k}\right)\|\bw-\bw\+\|^2\\
        & \left.-\frac{1}{2\tau_k}\|\bw\0-\bw\+\|^2+\frac{G\theta_k\alpha_k}{2}\|\bw\0-\bw\m\|^2 +\delta_k+\frac{1}{2}\sqrt{2(\mu+\tau_k^{-1})\delta_k}\right\}\,.
    \end{aligned}
\end{equation}

Multiplying both sides of (\ref{eq:one-step-final}) by $\gamma_k$ and  summing over $k=0$ to $K-1$ we obtain that

\begin{equation}\nonumber
    \begin{aligned}
        &\sum_{k=0}^{K-1} \gamma_k\ex\left\{L(\bw\+,\blam)-L(\bw,\blam\+)\right\}\\
        \leq \,&\ex\left\{\ \frac{\gamma_0}{2\eta_0}\|\blam-\blam_0\|^2+\sum_{k=0}^{K-2}\underbrace{\half\left(\frac{\gamma\+}{\eta\+}-\frac{\gamma\0}{\eta\0}\right)\|\blam-\blam\+\|^2}_{A}-\frac{\gamma_{K-1}}{2\eta_{K-1}}\|\blam-\blam_K\|^2\right.\\
        +&\frac{\gamma_0}{2\tau_0}\|\bw-\bw_0\|^2+\sum_{k=0}^{K-2}\underbrace{\half\left[\frac{\gamma\+}{\tau\+}-\gamma_k\left(\frac{1}{\tau_k}+\mu-\sqrt{2(\mu+\tau_k^{-1})\delta_k}\right)\right]\|\bw-\bw\+\|^2}_{B}\\
        -& \frac{\gamma_{K-1}}{2}\left(\frac{1}{\tau_{K-1}}+\mu-\sqrt{2(\mu+\tau_{K-1}^{-1})\delta_{K-1}}\right)\|\bw-\bw_K\|^2\\
        +&\sum_{k=0}^{K-2} \underbrace{(\gamma_k-\gamma\+\theta\+)}_{C}\langle \bell(\bw\+)-\bell(\bw\0),\blam-\blam\+\rangle+\gamma_{K-1}\langle \bell(\bw_K)-\bell(\bw_{K-1}),\blam-\blam_K\rangle\\
        +&\half\sum_{k=0}^{K-2}\underbrace{\left(\gamma\+\theta\+\alpha\+ G-\frac{\gamma_k}{\tau_k}\right)}_{D}\|\bw\0-\bw\+\|^2-\frac{\gamma_{K-1}}{2\tau_{K-1}}\|\bw_K-\bw_{K-1}\|^2\\
        +&\left.\half\sum_{k=0}^{K-1}\underbrace{\left[-\gamma_k\left(\frac{1}{\eta_k}-\theta_k\frac{G}{\alpha_k}\right) \right] }_{E}\|\blam\0-\blam\+\|^2+\sum_{k=0}^{K-1}\left(\delta_k \gamma_k + \frac{\gamma_k}{2}\sqrt{2(\mu+\tau_k^{-1})\delta_k}\right)\right\}.\\
    \end{aligned}
\end{equation}
Here we use $ \bell(\bw_0)-\bell(\bw_{-1})=0$ by $\bw_0=\bw_{-1}$ and $\blam_0=\blam_{-1}$. By Condition~\ref{condition:alg_para}, we have $A,B,D,E \leq 0$ and $C=0$.

Then we have
\begin{equation}\label{eq:main-descent}
    \begin{aligned}
        &\sum_{k=0}^{K-1} \gamma_k\ex\left\{L(\bw\+,\blam)-L(\bw,\blam\+)\right\}\\
        \leq \, & \ex\left\{\frac{\gamma_0}{2\eta_0}\|\blam-\blam_0\|^2-\frac{\gamma_{K-1}}{2\eta_{K-1}}\|\blam-\blam_K\|^2+\frac{\gamma_0}{2\tau_0}\|\bw-\bw_0\|^2\right.\\
        -& \frac{\gamma_{K-1}}{2} \left(\frac{1}{\tau_{K-1}}+\mu-\sqrt{2(\mu+\tau_{K-1}^{-1})\delta_{K-1}}\right)\|\bw-\bw_K\|^2\\
        + & \gamma_{K-1}\langle \bell(\bw_K)-\bell(\bw_{K-1}),\blam-\blam_K\rangle-\frac{\gamma_{K-1}}{2\tau_{K-1}}\|\bw_K-\bw_{K-1}\|^2\\
        +& \left.\sum_{k=0}^{K-1}\left(\delta_k \gamma_k + \frac{\gamma_k}{2}\sqrt{2(\mu+\tau_k^{-1})\delta_k}\right)\right\}.
    \end{aligned}
\end{equation}

Next  we bound $\gamma_{K-1}\langle \bell(\bw_K)-\bell(\bw_{K-1}),\blam-\blam_K\rangle$ similar to (\ref{eq:smooth-phi}). We have
\begin{equation}\nonumber
        \begin{aligned}
            &\langle \bell(\bw_K)-\bell(\bw_{K-1}),\blam-\blam_K\rangle
            \leq \frac{G\alpha_K}{2}\|\bw_K-\bw_{K-1}\|^2+\frac{1}{2}\frac{G}{\alpha_K}\|\blam-\blam_K\|^2.
        \end{aligned}
    \end{equation}

Taking the expectation and plugging this into (\ref{eq:main-descent}) we obtain that
\begin{equation}
    \begin{aligned}
        &\sum_{k=0}^{K-1}\gamma_k\ex\, \left\{L(\bw\+,\blam)-L(\bw,\blam\+)\right\}\\
        \leq &\ex\, \left\{\frac{\gamma_0}{2\eta_0}\|\blam-\blam_0\|^2-\half\underbrace{\left[\frac{\gamma_{K-1}}{\eta_{K-1}}-\gamma_{K-1}\frac{G}{\alpha_K}\right]}_{\tilde{A}}\|\blam-\blam_K\|^2\right.\\
        +& \frac{\gamma_0}{2\tau_0}\|\bw-\bw_0\|^2-\underbrace{\frac{\gamma_{K-1}}{2}\left(\frac{1}{\tau_{K-1}}+\mu-\sqrt{2(\mu+\tau_{K-1}^{-1})\delta_{K-1}}\right)}_{\tilde{B}}\|\bw-\bw_K\|^2\\
        +&\left.\frac{\gamma_{K-1}}{2}\underbrace{\left(\alpha_K G-\frac{1}{\tau_{K-1}}\right)}_{\tilde{C}}\|\bw_K-\bw_{K-1}\|^2+\sum_{k=0}^{K-1}\left(\delta_k \gamma_k+\frac{\gamma_k}{2}\sqrt{2(\mu+\tau_k^{-1})\delta_k}\right)\right\}\\
    \end{aligned}
\end{equation}

We analyze $\Tilde{A}$-$\Tilde{D}$ under Condition~\ref{condition:alg_para}:
$$\begin{aligned}
    &\tilde{A}\overset{(\ref{assumption:step-size_1})}{\geq} \left[\frac{\gamma_K}{\eta_K}-\gamma_{K-1}\frac{G}{\alpha_K}\right]\overset{(\ref{assumption:step-size_3})}{=}\gamma_K\left[\frac{1}{\eta_K}-\theta_{K}\frac{G}{\alpha_K}\right]\overset{(\ref{assumption:step-size_7})}{\geq} 0,\\
    &\tilde{B}\overset{(\ref{assumption:step-size_2})}\geq \frac{\gamma_K}{2\tau_K},\\
    &\tilde{C} \overset{(\ref{assumption:step-size_6})}{\leq} 0.
\end{aligned}$$

We obtain that
\begin{equation}\nonumber
    \begin{aligned}
        &\sum_{k=0}^{K-1}\gamma_k\ex\, \left\{L(\bw\+,\blam)-L(\bw,\blam\+)\right\}\\
        \leq & \frac{\gamma_0}{2\eta_0}\|\blam-\blam_0\|^2 + \frac{\gamma_0}{2\tau_0}\|\bw-\bw_0\|^2 - \frac{\gamma_K}{2\tau_K}\ex\,\|\bw-\bw_K\|^2+\sum_{k=0}^{K-1}\left(\delta_k \gamma_k+\frac{\gamma_k}{2}\sqrt{2(\mu+\tau_k^{-1})\delta_k}\right)\\
    \end{aligned}
\end{equation}

 Let $\bw=\bw^\star,\blam=\blam^\star$, for any $\bw\in\mathbb{R}^d$ and $\blam\in\Pi_{\bsig}$, we have $L(\bw^\star,\blam^\star)=\max_{\blam\in\Pi_{\bsig}}L(\bw^\star,\blam)\geq L(\bw^\star,\blam)$. On the other hand, we have $L(\bw,\blam^\star)\geq L(\bw^\star,\blam^\star)=\min_{\bw}L(\bw,\blam^\star)$. Thus we obtain that $L(\bw\+,\blam)-L(\bw,\blam\+)\geq 0$ for $\forall k=0,\dots,K-1$. This completes the proof.
\end{proof}

Now we are ready to present our main theorem.
By choosing appropriate parameters in Algorithm~\ref{alg:main_alg} to satisfy Condition~\ref{condition:alg_para}, we can achieve the desired convergence rate.

\begin{theo}\label{thm:main-thm}
    Assume Assumption~\ref{assumption:basic} holds.
    Set $\eta_k=\frac{\mu(k+1)}{8G^2},\theta_k=\frac{k}{k+1},\gamma_k=k+1,\tau_k=\frac{4}{\mu (k+1)},\alpha_{k+1} = G\eta_k,\delta_k = \min \left(\frac{\mu}{8(k+5)},\mu(k+1)^{-6}\right)$. 
    Then Condition~\ref{condition:alg_para} is satisfied by the above parameters. Moreover, use the same conditions as Lemma~\ref{lem:w_subproblem}, while replacing $\epsilon$ with $\delta_k$ for $k=0,1,\dots,K-1$. Then we have
     $$
     \ex \|\bw_K-\bw^\star\|^2=O\left(\frac{G^2}{\mu^2K^2}\right).$$
\end{theo}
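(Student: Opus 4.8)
The plan is to treat the theorem as essentially a corollary of the master recursion in Lemma~\ref{lem:pri-convergence}: all of the conceptual work is already packaged there, so what remains is (i) checking that the stated parameter schedule satisfies Condition~\ref{condition:alg_para}, (ii) invoking Lemma~\ref{lem:w_subproblem} to justify the $\delta_k$-inexact inner solves that Lemma~\ref{lem:pri-convergence} assumes, and (iii) substituting the schedule into the conclusion of Lemma~\ref{lem:pri-convergence} and bounding the accumulated error.

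For (i), I would verify the five sub-conditions (\ref{assumption:step-size_1})--(\ref{assumption:step-size_7}) one by one. Most are immediate: $\gamma_k/\eta_k=8G^2/\mu$ is constant, so (\ref{assumption:step-size_1}) holds with equality; $\theta_kG/\alpha_k=8G^2/(\mu(k+1))=1/\eta_k$ makes (\ref{assumption:step-size_7}) tight (the $k=0$ case is vacuous since $\theta_0=0$); and (\ref{assumption:step-size_3}), (\ref{assumption:step-size_6}) are the one-line facts $\gamma_{k+1}\theta_{k+1}=(k+2)\frac{k+1}{k+2}=\gamma_k$ and $G\alpha_{k+1}=\mu(k+1)/8\le\mu(k+1)/4=\tau_k^{-1}$. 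The only constraint that uses the particular form of $\delta_k$ is the $\tau$-recursion (\ref{assumption:step-size_2}): since $\mu+\tau_k^{-1}=\mu(k+5)/4$, the bound $\delta_k\le\mu/(8(k+5))$ gives $\sqrt{2(\mu+\tau_k^{-1})\delta_k}\le\mu/4$, after which $\gamma_k(\tau_k^{-1}+\mu-\sqrt{2(\mu+\tau_k^{-1})\delta_k})\ge\mu(k+1)^2/4+3\mu(k+1)/4\ge\mu(k+2)^2/4=\gamma_{k+1}/\tau_{k+1}$, the last step reducing to $k\ge0$. So the $\sqrt{\delta_k}$ slack introduced by inexactness is absorbed by the gap between $\mu$ and the $O(1/k)$ growth of $\tau_k^{-1}$.

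For (ii) and (iii): with $\tau_k=4/(\mu(k+1))$ one has $L\tau_k/(\mu\tau_k+1)=4L/(\mu(k+5))$, so $m_k=\Theta(L/(\mu(k+5)))$ (the additive constant guaranteeing $m_k\ge2$ once the ratio drops below $1$) and $T_k=O(m_k\log(1/\delta_k))$ land exactly in the regime of Lemma~\ref{lem:w_subproblem}, i.e. $\bw_{k+1}$ is $\delta_k$-near-optimal for $P_k$ in expectation. Then Lemma~\ref{lem:pri-convergence}, using $\gamma_K/(2\tau_K)=\mu(K+1)^2/8$, $\gamma_0/(2\eta_0)=4G^2/\mu$, $\gamma_0/(2\tau_0)=\mu/8$, yields
$$\frac{\mu(K+1)^2}{8}\,\ex\|\bw_K-\bw^\star\|^2\ \le\ \frac{4G^2}{\mu}\|\blam^\star-\blam_0\|^2+\frac{\mu}{8}\|\bw_0-\bw^\star\|^2+\sum_{k=0}^{K-1}\Bigl(\gamma_k\delta_k+\frac{\gamma_k}{2}\sqrt{2(\mu+\tau_k^{-1})\delta_k}\Bigr).$$
I would bound the series using $\gamma_k\delta_k\le\mu(k+1)^{-5}$ and, from $\mu+\tau_k^{-1}\le\frac{5\mu}{4}(k+1)$ and $\delta_k\le\mu(k+1)^{-6}$, $\frac{\gamma_k}{2}\sqrt{2(\mu+\tau_k^{-1})\delta_k}=O(\mu(k+1)^{-3/2})$; both $\sum_k(k+1)^{-5}$ and $\sum_k(k+1)^{-3/2}$ converge, so the sum is $O(\mu)$ uniformly in $K$. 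Dividing through by $\mu(K+1)^2/8$, and using $\|\blam^\star-\blam_0\|\le2\|\bsig\|\le2$ (every point of the permutahedron has norm at most $\|\bsig\|$) together with the fact that $\|\bw_0-\bw^\star\|$ is a fixed problem constant, leaves $\ex\|\bw_K-\bw^\star\|^2=O(G^2/(\mu^2K^2))$.

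The hard part is not any one calculation but the simultaneous calibration of $\delta_k$: it must decay like $\mu/k$ so that the $\sqrt{\delta_k}$ perturbation stays inside the slack of (\ref{assumption:step-size_2}), letting $\gamma_k/\tau_k$ keep growing quadratically (which is exactly what produces the $1/K^2$ rate), yet it must also decay like $\mu k^{-6}$ so that $\sum_k\gamma_k\delta_k$ and $\sum_k\gamma_k\sqrt{(\mu+\tau_k^{-1})\delta_k}$ converge and do not swamp the $O(1/K^2)$ signal. The choice $\delta_k=\min(\mu/(8(k+5)),\mu(k+1)^{-6})$ threads this needle, at the cost of an extra $\log(1/\delta_k)=O(\log k)$ factor in $T_k$, which is the origin of the logarithmic factor in the Corollary's sample complexity.
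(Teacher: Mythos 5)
Your proposal is correct and follows essentially the same route as the paper's own proof: verify Condition~\ref{condition:alg_para} term by term (with the same $\sqrt{2(\mu+\tau_k^{-1})\delta_k}\le\mu/4$ absorption into the slack of (\ref{assumption:step-size_2})), invoke Lemma~\ref{lem:w_subproblem} for the $\delta_k$-inexact inner solves, and substitute the schedule into Lemma~\ref{lem:pri-convergence}, bounding the error series by convergent sums. Your explicit remark that $\|\blam^\star-\blam_0\|$ is bounded because both points lie in the permutahedron is a small but welcome addition that the paper leaves implicit.
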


\begin{proof}
    First, we obtain an $\delta_k$ approximate solution to (\ref{eq:w_subproblem}) through $T_k$ updates to $\bw$ in Algorithm~\ref{alg:main_alg} by Lemma~\ref{lem:w_subproblem}.
    We then verify that Condition~\ref{condition:alg_para} is satisfied by the parameters.

    It is not hard to see that $\frac{\gamma\+}{\gamma\0}=\frac{\eta_{k+1}}{\eta_k}=\frac{k+2}{k+1}$ and $\theta_{k+1}=\frac{\gamma_k}{\gamma_{k+1}}=\frac{k+1}{k+2}$. Thus (\ref{assumption:step-size_1}) and (\ref{assumption:step-size_3}) are satisfied.

    Since $\delta_k\leq\frac{\mu}{8(k+5)}$, we have $\sqrt{2(\mu+\tau_k^{-1})\delta_k}=\sqrt{2\mu(1+\frac{k+1}{4})\delta_k}\leq \frac{\mu}{4}$. Then we obtain that
    $$\frac{\gamma_{k+1}}{\gamma_k\tau_{k+1}}=\frac{k+2}{4}\mu+\frac{k+2}{4(k+1)}\mu\leq \frac{k+4}{4}\mu,$$
    and
    $$
    \frac{1}{\tau_k}+\mu-\sqrt{2(\mu+\tau_k^{-1})\delta_k}\geq\frac{k+1}{4}\mu+\mu-\frac{\mu}{4}=\frac{k+4}{4}\mu.$$
    Thus (\ref{assumption:step-size_2}) holds.

    Furthermore, (\ref{assumption:step-size_6}) and (\ref{assumption:step-size_7}) hold due to $G\alpha_{k+1}=G^2\eta_k=\frac{k+1}{8}\mu\leq\frac{k+1}{4}\mu=\frac{1}{\tau_k}$ and $\theta_k\frac{G}{\alpha_k}=\frac{\eta_{k-1}}{\eta_k}\frac{G}{G\eta_{k-1}}=\frac{1}{\eta_k}$.

    Now Condition~\ref{condition:alg_para} is satisfied. By Lemma~\ref{lem:pri-convergence}, we have
    $$
    \frac{\gamma_K}{2\tau_K}\ex\,\|\bw-\bw_K\|^2\leq \frac{\gamma_0}{2\eta_0}\|\blam^\star-\blam_0\|^2+\frac{\gamma_0}{2\tau_0}\|\bw^\star-\bw_0\|^2 + \sum_{k=0}^{K-1}\left(\delta_k \gamma_k+\frac{\gamma_k}{2}\sqrt{2(\mu+\tau_k^{-1})\delta_k}\right).$$

    Since $\delta_k\leq \mu(k+1)^{-6}$, we have $\sum_{k=0}^\infty \delta_k \gamma_k\leq \mu\sum_{k=0}^\infty (k+1)^{-5}\leq \frac{\mu}{4}$, 
    and 
    $$\sum_{k=0}^\infty \gamma_k\sqrt{(\mu+\tau_k^{-1})\delta_k}\leq\frac{\sqrt{2}\mu}{4}\sum_{k=0}^\infty (k+1)^{-2}\sqrt{k+5}\leq \frac{\sqrt{2}\mu}{4}\sum_{k=0}^\infty (k+1)^{-2}\left(\sqrt{k+1}+2\right)\leq \sqrt{2}\mu.$$

    Finally, by $\frac{\gamma_K}{2\tau_K}=\frac{\mu (K+1)^2}{8},\tau_0=\frac{4}{\mu}$ and $\eta_0=\frac{\mu}{8G^2}$, we get the desired result.
\end{proof}

\begin{coro}
    Suppose the step-size $\alpha<\frac{L}{4}$ and $m_k=\Theta(\frac{L}{\mu(k+1)})$. With the same conditions as Theorem~\ref{thm:main-thm}, we obtain an output $\bw_K$ such that $\ex\|\bw_K-\bw^\star\|^2\leq\epsilon$ in a total sample complexity of $O\left(n\frac{G}{\mu\sqrt{\epsilon}}\log\frac{G}{\mu^2\sqrt{\epsilon}}+\frac{L}{\mu}\log\frac{G}{\mu\sqrt{\epsilon}}\log\frac{G}{\mu^2\sqrt{\epsilon}}\right)$ using Algorithm~\ref{alg:main_alg}.
\end{coro}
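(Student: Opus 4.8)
The plan is to combine the outer-loop iteration complexity from Theorem~\ref{thm:main-thm} with the per-iteration inner-loop sample complexity from Lemma~\ref{lem:w_subproblem}, and then sum the logarithmic factors. First I would fix the number of outer iterations. Theorem~\ref{thm:main-thm} gives $\ex\|\bw_K-\bw^\star\|^2\le C G^2/(\mu^2K^2)$ for an absolute constant $C$, so choosing $K=\lceil\sqrt{C}\,G/(\mu\sqrt{\epsilon})\rceil=\Theta\!\left(G/(\mu\sqrt{\epsilon})\right)$ makes $\ex\|\bw_K-\bw^\star\|^2\le\epsilon$. It then remains to count the total number of sample gradients used across these $K$ outer iterations.

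Next, for each outer iteration $k$ the algorithm must return a $\delta_k$-approximate minimizer of $P_k$ with $\delta_k=\min\!\left(\mu/(8(k+5)),\,\mu(k+1)^{-6}\right)$. Substituting $\tau_k=4/(\mu(k+1))$ into the bound of Lemma~\ref{lem:w_subproblem} yields $\mu\tau_k+1=(k+5)/(k+1)$ and hence $L\tau_k/(\mu\tau_k+1)=4L/(\mu(k+5))$, so the prescribed $m_k=\Theta\!\left(L/(\mu(k+1))\right)$ is consistent with the choice $m_k=\Theta\!\left(L\tau_k/(\mu\tau_k+1)\right)$ of Lemma~\ref{lem:w_subproblem} (since $k+5=\Theta(k+1)$), and the sample complexity of outer iteration $k$ is $O\!\left((n+\tfrac{L}{\mu(k+5)})\log(1/\delta_k)\right)$. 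I would then observe that $\delta_k$ is nonincreasing in $k$, so $\log(1/\delta_k)\le\log(1/\delta_{K-1})$; since $1/\delta_{K-1}=\max\!\left(8(K+4)/\mu,\,K^6/\mu\right)$ we get $\log(1/\delta_{K-1})=O(\log(K/\mu))$, and plugging in $K=\Theta\!\left(G/(\mu\sqrt{\epsilon})\right)$ gives the uniform bound $\log(1/\delta_k)=O\!\left(\log\tfrac{G}{\mu^2\sqrt{\epsilon}}\right)$.

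Finally I would sum over $k=0,\dots,K-1$. Factoring out this uniform logarithmic bound, the total is $O\!\left(\log\tfrac{G}{\mu^2\sqrt{\epsilon}}\right)\cdot\left(nK+\tfrac{L}{\mu}\sum_{k=0}^{K-1}\tfrac{1}{k+5}\right)$. Using the harmonic bound $\sum_{k=0}^{K-1}\tfrac{1}{k+5}=O(\log K)=O\!\left(\log\tfrac{G}{\mu\sqrt{\epsilon}}\right)$ together with $K=\Theta\!\left(G/(\mu\sqrt{\epsilon})\right)$ produces exactly $O\!\left(n\tfrac{G}{\mu\sqrt{\epsilon}}\log\tfrac{G}{\mu^2\sqrt{\epsilon}}+\tfrac{L}{\mu}\log\tfrac{G}{\mu\sqrt{\epsilon}}\log\tfrac{G}{\mu^2\sqrt{\epsilon}}\right)$, the claimed bound.

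The main obstacle here is purely bookkeeping rather than any hard inequality: one must check that the hypotheses of Lemma~\ref{lem:w_subproblem} ($\alpha<L/4$, $m_k=\Theta\!\left(L\tau_k/(\mu\tau_k+1)\right)$, and $T_k=O(m_k\log(1/\delta_k))$) hold simultaneously for every $k$ under the parameter choices of Theorem~\ref{thm:main-thm}, and that the several logarithms appearing along the way — the per-step $\log(1/\delta_k)$, the harmonic-sum $\log K$, and the accuracy $\log(1/\delta_{K-1})$ — all collapse, up to absolute constants, into the two stated factors $\log\tfrac{G}{\mu\sqrt{\epsilon}}$ and $\log\tfrac{G}{\mu^2\sqrt{\epsilon}}$. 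Beyond that, the result follows directly from Theorem~\ref{thm:main-thm} and Lemma~\ref{lem:w_subproblem}.
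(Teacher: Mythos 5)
Your proposal is correct and follows essentially the same route as the paper: invoke Theorem~\ref{thm:main-thm} to fix $K=\Theta(G/(\mu\sqrt{\epsilon}))$, use Lemma~\ref{lem:w_subproblem} with $\tau_k=4/(\mu(k+1))$ for the per-iteration cost $O((n+\tfrac{L}{\mu(k+1)})\log(1/\delta_k))$, and sum over $k$. The only cosmetic difference is that you factor out a uniform bound $\log(1/\delta_k)=O(\log(K/\mu))$ before summing, whereas the paper keeps $\log(1/\delta_k)$ inside the sum and evaluates $\sum_k \log(k)/k=O((\log K)^2)$ directly; both yield the identical final bound.
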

\begin{proof}
    Recall that $\tau_k=\frac{4}{\mu(k+1)}$. It is not hard to see that $\frac{L\tau_k}{\mu\tau_k+1}=\frac{4L}{\mu(k+5)}\leq \frac{4L}{\mu(k+1)}$. By Lemma~\ref{lem:w_subproblem}, we get a $\delta_k$ approximate solution with the sample complexity of $C_{\bw\+}=O\left(\left(n+\frac{L}{\mu(k+1)}\right)\log\left(\delta_k^{-1}\right)\right)$. We set $\delta_k=\min\left(\frac{\mu}{8(k+5)},\mu(k+1)^{-6}\right)=\mu(k+1)^{-6}$ for $k\geq 1$. And $\delta_0=\mu/40$. The total sample complexity is
    $$
    \begin{aligned}
        &\sum_{k=0}^{K-1}C_{\bw\+}=\sum_{k=0}^{K-1}O\left(\left(n+\frac{L}{\mu(k+1)}\right)\left(\log(k+1)-\log\mu\right)\right)\\
        &=O\left(nK\log\frac{K}{\mu}+\frac{L}{\mu}\log K\log\frac{K}{\mu} \right).
    \end{aligned}
    $$
    In the last equality, we calculate $\sum_{k=1}^{K}\frac{\log k}{k}=O\left(\left(\log K\right)^2\right)$, $\sum_{k=1}^K \log k=O\left(K\log K\right)$ and $\sum_{k=1}^K\frac{1}{k}=O(\log K)$. By Theorem~\ref{thm:main-thm}, to achieve an $\epsilon$ approximate solution, we need $K=O\left(\frac{G}{\mu\sqrt{\ep}}\right)$. Therefore, the total sample complexity is $O\left(n\frac{G}{\mu\sqrt{\epsilon}}\log\frac{G}{\mu^2\sqrt{\epsilon}}+\frac{L}{\mu}\log\frac{G}{\mu\sqrt{\epsilon}}\log\frac{G}{\mu^2\sqrt{\epsilon}}\right).$
\end{proof}

\section{Experimental Details}\label{apx:exp-detail}
We now outline the details of our experimental setup. Our experimental setup mainly follows that of~\citet{mehta2024distributionally}.

\paragraph{Datasets.}
We use the same five datasets from the regression task in~\citet{mehta2024distributionally}. The statistical characteristics are summarized in Table~\ref{tab:dataset_summary}. Tasks for each dataset are as follows:
\begin{enumerate}
    \item \texttt{yacht}: predicting the residual resistance of sailing yachts based on their physical attributes.
    \item \texttt{energy}: predicting the cooling load of buildings based on their physical attributes.
    \item \texttt{concrete}: predicting the compressive strength of concrete types based on their physical and chemical properties.
    \item \texttt{kin8nm}: predicting the distance of a robot arm, consisting of 8 fully rotating joints, to a target point in space.
    \item \texttt{power}: predicting the net hourly electrical energy output of a power plant based on environmental factors.
\end{enumerate}

\begin{table}[h!]
    \centering
    \begin{tabular}{lccc}
        \toprule
        Dataset & \# features & \# samples & Source \\
        \midrule
        \texttt{yacht} & 6 & 244 & \citet{tsanas2012accurate} \\
        \texttt{energy} & 8 & 614 &  \citet{baressi2019artificial} \\
        \texttt{concrete} & 8 & 824 &  \citet{yeh2006analysis} \\
        \texttt{kin8nm} & 8 & 6,553  & \citet{akujuobi2017delve} \\
        \texttt{power} & 4 & 7,654 & \citet{tüfekci2014prediction} \\
        \bottomrule
    \end{tabular}
    \caption{ Statistical details of five real datasets and sources.}
    \label{tab:dataset_summary}
\end{table}
In the experiments, the sample matrix $\boldsymbol{X}\in\mathbb{R}^{n\times d}$ of the training set is standardized such that each column has a zero mean and a unit variance.

\paragraph{Objectives.}
We use linear models in our experiments. For spectral risks, we adopt three types: ESRM ($\rho=2$), Extremile ($r=2.5$), and CVaR ($\alpha=0.5$), as specified in Table~\ref{tab:sigma}. Additionally, we set the regularizer $g(\bw)$ to $\frac{\mu}{2}\|\bw\|^2$ with $\mu=\frac{1}{n}$. Thus, Problem~\ref{eq:problem} can be written as
$$\min_{\bw} \sum_{i=1}^n \sigma_i\ell_{[i]}(\bw)+\frac{\mu}{2}\|\bw\|^2,$$
where $\ell_i(\bw)=\frac{1}{2}\|y_i-\bw^\top\boldsymbol{x}_i\|^2$ and $(\boldsymbol{x}_i,y_i)$ are samples from the training set.

\paragraph{Hyperparameter Selection.}
\ We use the same hyperparameter selection method as in~\citep{mehta2024distributionally}. We set the batch size for SGD to $64$. Each time, we randomly select a minibatch indexes from $\{1,\dots,n\}$ without replacement for SGD and randomly select a single index for other methods. For the selection of step size $\alpha$, we set the random seed $s\in\{1,\dots,S\}$. For a single seed $s$, we calculate the average training loss of the last ten epochs, donated by $L_s(\alpha)$. We choose $\alpha$ that minimizes $\frac{1}{S}\sum_{s=1}^SL_s(\alpha)$, where $\alpha\in\{1 \times 10^{-4}, 3 \times 10^{-4}, 1 \times 10^{-3}, 3 \times 10^{-3}, 1 \times 10^{-2}, 3 \times 10^{-2}, 1 \times 10^{-1}, 3\times 10^{-1}\}$. For LSVRG, we set the length of an epoch to $n$. For SOREL,
we set $T_k=m_k=n$. Moreover, we set batch size to 64 for all algorithms with minibatching.

For SOREL, we follow the parameter values given in Theorem~\ref{thm:main-thm-paper}. In particular, we set $\theta_k=\frac{k}{k+1}$ and $\tau_k=\frac{20n}{k+1}$ in all experiments. Therefore, there are only two parameters $\alpha$ and $\eta_k$ left to tune. We set $\eta_k=\frac{C(k+1)}{n}$ and choose $C$ from $\{ 1 \times 10^{-2}, 2 \times 10^{-2}, 4 \times 10^{-2}, 1 \times 10^{-1}, 2 \times 10^{-1}, 4 \times 10^{-1}, 1 \times 10^{0}, 2 \times 10^{0}, 4 \times 10^{0}\}$, since the Lipschitz constant $G$ is hard to estimate. We use grid search to select $\alpha$ and $C$, with the selection criteria being the same as the previous paragraph.

\paragraph{Experimental Environment.}
We run all experiments on a laptop with 16.0 GB RAM and Intel i7-1360P 2.20 GHz CPU. All algorithms are implemented in Python 3.8.

\section{Examples}~\label{apx:example}
To illustrate the necessity of stabilizing the trajectory of the primal variable in Section~\ref{sec:alg-prob}, we provide a toy example. For simplicity, we consider a one-dimensional problem
\begin{equation}\label{eq:example}
    \min_{w\in\mathbb{R}} \sigma_1\ell_{[1]}(w)+\sigma_2\ell_{[2]}(w),
\end{equation}
where $\sigma_1 = 0, \sigma_2=1$ and $\ell_1 = \frac{1}{2}(w-1)^2,\ell_2(w)=\frac{1}{2}(w+1)^2$.
We use the following deterministic method, similar to Algorithm~\ref{alg:main_alg}.

\begin{example}
    For any $0<\alpha<2$, suppose we solve Problem~(\ref{eq:example}) using Algorithm~\ref{alg:example}
    and $T$ is sufficiently large. In that case, the iterative sequence $\{w_k\} $ can not converge to the optimal solution for any initial point $w_0$.
\end{example}

\begin{algorithm}
\caption{Simplified Algorithm for Solving the Example Problem. }
\label{alg:example}
\begin{algorithmic}[1]
\FORALL{$k = 0, 1, \dots$}
    \STATE Update $\{\lambda_{k+1,1}, \lambda_{k+1,2}\} = \{\sigma_1,\sigma_2\}$ if $\ell_1(w_k)\geq\ell_2(w_k)$ else $\{\sigma_2,\sigma_1\}$. Set $w_k^0 = w_k$.
    \FORALL{$t = 0, 1, \ldots, T-1$}
        \STATE  Compute the gradient $g^t = \lambda_{k+1,1} \nabla \ell_1(w_k^t) + \lambda_{k+1,2} \nabla \ell_2(w_k^t)$.
        \STATE  Update $w^{t+1} = w^t - \alpha g^t$.
    \ENDFOR
    \STATE Set $w_{k+1} = w_k^T$.
\ENDFOR
\end{algorithmic}
\end{algorithm}

Without loss of generality, we assume $w_0 > 0$, in which case $\blam_1 = [0,1]^\top$. We solve $\min_{w_1\in\mathbb{R}} \frac{1}{2}(w_1+1)^2$ through sufficient steps of gradient descent to obtain $w_1 = -1$. At this point, $\blam_2 = [1,0]^\top$. By iterating this process, $w_k$ always oscillates between -1 and 1, unable to converge to $w^\star = 0$. If $w_0 = 0$, we set $\sigma_1 = 1$ and $\sigma_2 = 0$, reaching the same conclusion. A similar conclusion can be extended to stochastic methods in the expectation sense.

We know that $\ell_1(w^\star)=\ell_2(w^\star)$ at the optimal point $w^\star=0$. Clearly, the iterative sequence of the algorithm oscillates at $w^\star$ and cannot converge to the optimal solution.
Although \citet{mehta2022stochastic,mehta2024distributionally} employ a similar approach to update $\blam$ for subgradient estimations, they consider the smoothed spectral risk by adding a strongly concave term with respect to $\blam$. However, for the original spectral risk minimization problems, updating $\blam$ with their method results in discontinuities, thereby lacking convergence guarantees.

\end{document}